\newcommand{\bd}{\mathbf{d}}
\newcommand{\cA}{\mathcal{A}}
\newcommand{\cM}{\mathcal{M}}
\providecommand{\C}{\mathcal{C}}
\providecommand{\set}[1]{\lbrace #1 \rbrace}
\providecommand{\dom}{{\rm{dom}}}
\newtheorem{question}{Question}
\newtheorem{thm}{Theorem}[section]
\newtheorem{remark}[thm]{Remark}
\newtheorem{lemma}[thm]{Lemma}
\newtheorem{prop}[thm]{Proposition}
\newtheorem{corollary}[thm]{Corollary}
\newtheorem{claim}[thm]{Claim}
\theoremstyle{definition}
\newtheorem{defn}[thm]{Definition}
\theoremstyle{defn}
\renewcommand{\phi}{\varphi}
\DeclareMathOperator{\range}{range}
\newcommand{\Text}{\textnormal{\textbf{Txt}}}
\newcommand{\Inf}{\textnormal{\textbf{Inf}}}
\newcommand{\Ex}{\textnormal{\textbf{Ex}}}
\newcommand{\cont}{\text{content}}
\newcommand{\skipmm}[1]{\hspace{-#1mm}}
\newcommand{\infi}{\mathrm{inf}}
\newcommand{\Last}{\mathrm{Last}}
\newcommand{\First}{\mathrm{First}}
\newcommand{\Succ}{\mathrm{Succ}}
\newcommand{\Block}{\mathrm{Block}}
\title{Learning families of algebraic structures from informant}
\author[Bazhenov]{Nikolay Bazhenov}
\address{Sobolev Institute of Mathematics, pr. Akad. Koptyuga 4, Novosibirsk,
630090 Russia;\
Novosibirsk State University, ul. Pirogova 2, Novosibirsk, 630090 Russia
}
\email{bazhenov@math.nsc.ru}
\urladdr{bazhenov.droppages.com}
\author[Fokina]{Ekaterina Fokina}
\address{Institute of Discrete Mathematics and Geometry, Vienna University of Technology, Austria}
\email{ekaterina.fokina@tuwien.ac.at}
\urladdr{dmg.tuwien.ac.at/fokina}
\author[San Mauro]{Luca San Mauro}
\address{Institute of Discrete Mathematics and Geometry, Vienna University of Technology, Austria}
\email{luca.san.mauro@tuwien.ac.at}
\urladdr{dmg.tuwien.ac.at/sanmauro}
\keywords{Inductive inference, algorithmic learning, computable structures, infinitary logic, Turing computable embeddings, linear orders}
\thanks{Bazhenov was supported by the Russian Science Foundation, project No.~18-11-00028. 
San Mauro was supported
by the Austrian Science Fund FWF, project M 2461. \\
The authors wish to thank two anonymous referees for valuable
comments.}
\subjclass[2010]{68Q32, 03C57}
\begin{document}

\maketitle

\begin{abstract}
We  combine computable structure theory and algorithmic learning theory  to study learning of families of algebraic structures. Our main result is a model-theoretic characterization of the learning type $\Inf\Ex_{\cong}$, consisting of the structures whose isomorphism types can be learned in the limit. We show that a family of structures is $\Inf\Ex_{\cong}$-learnable if and only if the structures can be distinguished in terms of their $\Sigma^{\inf}_2$-theories.  We apply this characterization to familiar cases and we show the following: there is an infinite learnable family of distributive lattices; no pair of Boolean algebras is learnable;  no infinite family of linear orders is learnable.
\end{abstract}

\section{Introduction}

In this paper we combine computable structure theory and algorithmic learning theory to study the question of extracting semantic knowledge from a finite amount of structured data.

Computable structures can be regarded as structures output by a Turing machine (with no input) step by step, where the number of steps is potentially infinite (but at most countable). At each step we observe larger and larger finite pieces of the structure: as soon as the algorithm outputs an element, it also reveals the  relations between this element and all the elements that appeared at previous stages. The algorithm can never change its mind whether a relation holds on particular elements or not. We refer the reader to Section \ref{sec:preliminaries} for a formal definition.

Looking at computable structures as described above is well-suited for an application
in inductive inference as initiated by Gold~\cite{Gold67}. Here a learner receives step by step more and more data (finite amount at each step) on an object
to be learned, and outputs a sequence of hypotheses that converges to
a finite description of the target object. In general, learning can be viewed as a dialogue between a teacher and a learner, where the learner must succeed in learning, provided the teacher satisfies a certain protocol. The formalization of this idea has two aspects: convergence behavior and teacher constraints. Again, formal definitions follow below.

Most work in inductive inference concerns either learning of formal languages or learning of general recursive functions \cite{Osh-Sto-Wei:b:86:stl, zz-tcs-08, lange2008learning}. The case of learning other structures has first been considered by Glymour~\cite{Gly:j:85} and is surveyed by Martin and Osherson~\cite{Mar-Osh:b:98}. More recently, in \cite{HaSt07, MS04,SV01} Stephan and co-authors considered learnable
ideals of rings, subgroups and submonoids of groups, subspaces of
vector spaces and isolated branches on uniformly computable sequences of trees. They showed that different types of learnability of
various classes of computable or computably enumerable structures have
strong connections to their algebraic characterizations (see, e.g., \cite[Theorem 3.1]{HaSt07}).
The fact of such correspondence between learnability from different types of information and algebraic properties of structures is of big interest from a mathematical point~of~view. 
In a sense, it is a way to study the interplay between algorithmic and algebraic properties of structures.

In this paper, we employ an approach that can be applied to an arbitrary class of computable structures. The main idea  is the following. Suppose we have a class of computable structures. And suppose we step by step get finite amounts of data about one of them. Then we learn the class, if after finitely many steps we correctly identify the structure we are observing. This is why, in this setting, we consider learning of a class of computable structures as a task of extracting semantic knowledge from finite amount of data.

In a recent paper \cite{FKS-ta} Fokina, K\"otzing and San Mauro considered learnable classes of equivalence structures. They reworked and extended the results, which appeared in Glymour~\cite{Gly:j:85}. In this paper we continue this line of investigation by applying the setup to other classes of structures. Our results (see Theorem \ref{thm:Sigma_2-theories}) are similar to Martin and Osherson's approach \cite{Mar-Osh:b:98}, but by using Turing computable embeddings, we can extract more information: in particular, we offer an upper bound to the computational power needed to learn a given family of structures (see Corollary \ref{coroll:complexity}). 

The paper is organized as follows. In Section \ref{sec:preliminaries} we give all the necessary definitions and useful facts from computable structure theory and learning theory. In Section \ref{sec:main_characterization} we prove our main result: a model-theoretic characterization of learnable families of structures. In Section \ref{sec:applications} we apply the characterization from the previous section to get examples of learnable and non-learnable classes of natural computable structures.


\section{Preliminaries}\label{sec:preliminaries}
In this section we review the necessary definitions about computable structures (Section \ref{subsect:comp_str}), infinitary formulas (Section \ref{subsect:infinitary}), and locking sequences (Section \ref{subsect:locking}).  In Section \ref{subsect:discussion}, we offer a gentle exposition to our learning paradigm, which is formally defined in Section \ref{subsect:formal}.

\smallskip

Our computability theoretic terminology is standard  and as in \cite{soare2016turing}. In particular, we denote by $\{\phi_e\}_{e\in\omega}$ a uniformly computable list of all computable functions, and by $\{\Phi^X_e\}_{e\in\omega}$ a uniformly computable list of all Turing operators with oracle $X$.

\subsection{Computable structures}\label{subsect:comp_str}

A signature is a collection of function symbols and
relation symbols that characterize an algebraic structure; a signature with no function symbol is \emph{relational}. An $L$-structure  $\mathcal{M}$  consists of a domain $M$ with an interpretation of the symbols of $L$: it is common to denote the interpretation of a function $f$ (resp.\ a relation $R$) in an $L$-structure as $f^{\mathcal{M}}$ ($R^{\mathcal{M}}$). Two $L$-structures $\mathcal{M,N}$ are isomorphic if there is a bijection $F\colon\dom(\mathcal{M})\rightarrow\dom(\mathcal{N})$ such that: 
\begin{itemize}
\item For every function symbol $g$ in $L$ of arity $n$, for all $a_1, \ldots, a_n$ in $\dom(\mathcal{M})^{n}$, $F(g^{\mathcal{M}}(a_1,\ldots,a_n))=g^{\mathcal{N}}(F(a_1),\ldots,F(a_n))$.
\item For every relation symbol $R$ in $L$ of some arity $m$, for all $a_1, \ldots, a_m$ in $\dom(\mathcal{M})$, $R^{\mathcal{M}}(a_1,\ldots,a_m)$ if and only if $ R^{\mathcal{N}}(F(a_1),\ldots,F(a_m))$.
\end{itemize}
We write $\mathcal{M}\cong\mathcal{N}$ to denote that $\mathcal{M}$ and $\mathcal{N}$ are isomorphic. 
The isomorphism is an equivalence relation on $L$-structures. The equivalence classes 
with respect to the relation $\cong$ are called \emph{isomorphism types}. We denote the isomorphism type of a structure $\mathcal{M}$ (i.e., the family of structures isomorphic to $\mathcal{M}$) as $[\mathcal{M}]_{\cong}$.

In the paper, we consider only finite signatures. When we talk about learnable families of $L$-structures, we assume that the domain of any countably infinite structure is equal to the set $\omega$ of the natural numbers. This allows us to effectively identify, through a fixed G\"odel numbering,  any sentence about such an ${L}$-structure with a natural number. We can then define the \emph{atomic diagram} $D(\mathcal{M})$ of such an ${L}$-structure $\mathcal{M}$ to be the set of $n\in\omega$ such that $n$ represents an atomic ${L}_M$-sentence true in $\mathcal{M}$ or the negation of an atomic  ${L}_M$-sentence that is false in $\mathcal{M}$. To measure the complexity of a structure, we identify it to its atomic diagram: we say that a structure $\mathcal{M}$ is $\bd$\emph{-computable} if $D(\mathcal{M})$ is a $\bd$-computable subset of $\omega$, where $\bd$ is a Turing degree. A \emph{presentation} of a 
countable algebraic structure is an arbitrary isomorphic copy $\cM'\cong \cM$ with the universe 
 a subset of $\omega$. We call a structure $\cM$ \emph{computably presentable} if it has a presentation $\cM'$ which is computable. A structure is called $\bd$-computably presentable if for some $\bd_0\leq \bd$ there exists a presentation $\cM'\cong\cM$ which is $\bd_0$-computable.

 Any computable structure $\cA$ in a  relational signature can
be presented as an increasing union of its finite substructures
\[
\cA^0\subseteq\cA^1\subseteq\ldots\subseteq\cA^i\subseteq\ldots,
\]
where $\cA^n$ is the restriction of $\cA$ to the domain $\{0,1,\ldots,n\}$  and $\cA=\bigcup_i\cA^i$.

By $\mathbb{K}_L$ we denote the class of all $L$-structures with domain $\omega$. Since the goal of our learning paradigm, as described below, is to identify the isomorphism type of structure from any of its presentations, we assume that every considered class of $L$-structures is closed under isomorphisms (modulo the restriction of the domain).

\smallskip

For additional background on computable structures, the reader is referred to \cite{AK00}.

\subsection{Informal discussion of our learning paradigm} \label{subsect:discussion}

Fokina, K{\"o}tzing, and San Mauro~\cite{FKS-ta} introduced the paradigm of informant learning for families of computably presentable structures. Before delving into the formal details, we illustrate the paradigm by considering two simple learning problems, by which we specify the following six items that characterize our paradigm: the learning domain, the hypothesis space, the information source, the prior knowledge, the criterion of success, and the learner.   The first problem, denoted as $\mathcal{P}_1$, consists in learning the family $\mathfrak{C}$, which consists of two countably infinite, undirected graphs:
\begin{enumerate}
	\item $G_1$ which contains only cycles of size two, and 
	\item $G_2$ containing only $3$-cycles.
\end{enumerate}
 
\subsubsection*{The learner} The learner is always assumed to be an algorithm.

\subsubsection*{The learning domain} Our paradigm aims at capturing the ability, or lack thereof, of learning a given structure independently of the way in which such a structure is presented. This approach is analogous with the idea, common in computable structure theory, of characterizing the sets $X$ that are \emph{coded} in a structure $\mathcal{S}$ as the sets that can be computed from any presentation of $\mathcal{S}$. Hence, the learning domain of $\mathcal{P}_1$  consists of the family $\mathfrak{C}^*$ of all possible presentations of $G_1$ and $G_2$, i.e.,  $\mathfrak{C}^*=\{H : H\cong G_1 \mbox{ or } H\cong G_2 \}$. Observe that  $\mathfrak{C}^*$ coincides with the union of the isomorphism types of $G_1$ and $G_2$. 

\subsubsection*{The hypothesis space} The hypothesis space of $\mathcal{P}_1$ is the set $\{1,2, ?\}$, where the symbols ``$1$'' and ``$2$'' means that the learner conjectures that the target graph is isomorphic, respectively,  to $G_1$ and $G_2$, and the symbol ``$?$'' means that the learner has no clue about the isomorphism type of the target graph. Notice that, since our paradigm deals with learning \emph{up to isomorphism}, it is sufficient to specify the symbols to refer to the nonisomorphic structures in $\mathfrak{C}$ (i.e., $G_1$ and $G_2$) and there is no need to extend the hypothesis space with other symbols for denoting all structures of $\mathfrak{C}^*$.

\subsubsection*{The information source}  
An informant $I$ for a graph $H$ in $\mathfrak{C}^*$ is  an infinite list of pairs containing: all
pairs $(x, y)$ of natural numbers, as the first component; and either $0$ or $1$, as the second
component, where this second component is $1$ if and only if $x$ and $y$ are adjacent in $H$.  So, each entry provided by $I$ can be regarded as a triple $(x,y,z)\in \omega\times\omega\times \{0,1\}$. We assume that, at any stage $s$, the learner receives the first $s$
triples of the informant $I$. 

This style of learning in which the learner receives both positive and negative information about the target object is called, after Gold~\cite{Gold67}, $\Inf$-learning. Learning without negative information is called
learning from text (as opposed to learning from informant) and is denoted by $\mathbf{Txt}$ instead
of $\mathbf{Inf}$. In \cite{FKS-ta}, the authors considered $\mathbf{Txt}$-learning of equivalence structures. In the present paper we focus only on 
$\Inf$-learning, postponing a systematic analysis of learning algebraic structures from text to a future work.

\subsubsection*{The prior knowledge} 
The
prior knowledge of $\mathcal{P}_1$ consists of the knowledge that the target graph is isomorphic to either
$G_1$ or $G_2$.

\subsubsection*{The criterion of success} Finally, the learning problem $\mathcal{P}_1$ is positively solved, if there is a learner that, receiving larger and larger pieces of any graph $G$ in $\mathfrak{C}^*$, eventually stabilize to a correct guess about  whether $G$ is isomorphic to $G_1$ or $G_2$. 

So, our learning paradigm is an instance of \emph{limit learning}: we allow the learner to have  an arbitrary (but finite) number of mind changes before
stabilizing on a correct conjecture. This style of learning, which dates back to Gold~\cite{Gold67}, is often called \emph{explanatory learning} (e.g., in \cite{case1983comparison}) and denoted as $\Ex$.

\medskip

Having informally specified the key items of our learning paradigm, one can easily design an algorithm for learning the family $\mathfrak{C}$:
\begin{itemize}
\item Given a graph $H$ as input, we search for a cycle of size $n\in \{2, 3\}$ inside $H$. If $n=2$, then $A_{\mathfrak{C}}$ conjectures that $H$ is a copy of $G_1$. If $n=3$, then $A_{\mathfrak{C}}$ thinks that $H\cong G_2$.
\end{itemize}

More formally, the algorithm $A_{\mathfrak{C}}$ is arranged as follows:
\begin{itemize}	
	\item We define $A_{\mathfrak{C}}(I[0]) :=\ ?$. At a stage $s+1$, proceed as follows: 
\begin{itemize}	
\item	If $A_{\mathfrak{C}}(I[s])\neq\ ?$, then just set $A_{\mathfrak{C}}(I[s+1]) := A_{\mathfrak{C}}(I[s])$.
		
\item		Otherwise, search for the least tuple $\bar a$ from $\omega$ such that the string $I[s+1]$ contains the following data: the tuple $\bar a$ forms a cycle of size $n$, where $n\in \{ 2,3\}$.
		\begin{itemize}
			\item If $n = 2$, then set $A_{\mathfrak{C}}(I[s+1]) :=1$.
			
			\item If $n = 3$, then $A_{\mathfrak{C}}(I[s+1]) :=2$.
			
			\item If there is no such $\bar a$, then define $A_{\mathfrak{C}}(I[s+1]) :=\ ?$.
\end{itemize}
		\end{itemize}
\end{itemize}

The described algorithm $A_{\mathfrak{C}}$ learns the family $\mathfrak{C}$: Suppose that an input $I$ encodes a structure $M$, which is isomorphic to either $G_1$ or $G_2$. Then there is a stage $s_0$ such that for any $s\geq s_0$, we have $A_{\mathfrak{C}}(I[s]) = A_{\mathfrak{C}}(I[s_0])$. Moreover, the conjecture $A_{\mathfrak{C}}(I[s_0])$ correctly identifies the isomorphism type of the graph $M$.

\smallskip

Our second learning problem, denoted as $\mathcal{P}_2$, is a generalization of the first one. Consider an \emph{infinite} family $\mathfrak{D}$, which consists of the following undirected graphs: for each $i\geq 1$, the graph $G_i$ contains infinitely many $(i+1)$-cycles and no other cycles.

The main features of $\mathcal{P}_2$ resemble those of $\mathcal{P}_1$: the learning domain of $\mathcal{P}_2$ is the family $\mathfrak{D}^*$ of \emph{all} presentations of the graphs in $\mathfrak{D}$;
 each informant $I$ provides both positive and negative information about any given graph in $\mathfrak{D}^*$; every conjecture is an element of the set $\omega \cup \{ ?\}$; a learner for $\mathcal{P}_2$ is an algorithm that learns, up to isomorphism, any graph in $\mathfrak{D}^*$; the
prior knowledge of $\mathcal{P}_2$ consists of the knowledge that the target graph is isomorphic to some graph from the family $\mathfrak{D}$.

  The intuition behind the desired learning algorithm $A_{\mathfrak{D}}$ is pretty straightforward: 

\begin{itemize}
\item Given a graph $H$, search for a cycle of some size $l+1$ inside it. When the first such cycle is found, start outputting the conjecture ``$H$ is a copy of $G_l$.''
\end{itemize}

The only technical problem of the algorithm $A_{\mathfrak{D}}$ is how to specify the hypothesis space of $\mathcal{P}_2$. Or, in other words:
\begin{center}
	\emph{How does one formally define the set of possible conjectures?}
\end{center}
We discuss two possible solutions of the problem, as they both seem to be pretty natural.


\smallskip

\emph{First Solution.} One can assume that, for any $m\in\omega$, the conjecture ``$m$'' means that ``$H\cong G_{m+1}$.'' 

This solution is similar to the so-called \emph{exact learning}, considered in the setting of computably enumerable (c.e.) languages (see, e.g., 	\cite{lange1993language,jain2011hypothesis}), where one assumes that the hypothesis space of the problem is precisely the class being learned
with the corresponding indexing. 
The exact learning algorithm $A^e_{\mathfrak{D}}$ is a straightforward modification of the algorithm $A_{\mathfrak{C}}$: 
\begin{itemize}
\item At a stage $s+1$, $A^e_{\mathfrak{D}}$ searches for the least tuple $\bar a$ such that the string $I[s+1]$ encodes the following data: the tuple $\bar a$ forms a cycle of some size $n\geq 2$. When such $\bar a$ is found, the algorithm starts outputting the conjecture ``$n-1$.''
\end{itemize}

One drawback of exact learning is that it can be computationally very hard to enumerate certain familiar families of computable structures, up to isomorphism: e.g., Goncharov and Knight~\cite{goncharov2002computable} proved that for the classes of  computable Boolean algebras, linear orders, and Abelian $p$-groups (we explore all such classes in Section \ref{sec:applications}) one cannot even hyperarithmetically enumerate their isomorphism types. This fact motivates the next solution.

\smallskip

\emph{Second Solution.} Fix a uniformly computable sequence $(\mathcal{M}_e)_{e\in\omega}$ of \emph{all} computable undirected graphs. W.l.o.g., one may assume that $\mathcal{M}_0\not\in\mathfrak{D}$ and $\mathcal{M}_{\langle i,0\rangle}\cong G_i$ for all $i\geq 1$. We assume that the conjecture ``$m$'' means that ``$H \cong \mathcal{M}_{m}$.''

This solution is similar to the so-called \emph{class-comprising learning} (see, e.g., \cite{lange1993language,jain2011hypothesis}), where one assumes that the hypothesis space of the problem should only contain the class being learned.

The class-comprising learning algorithm $A^{cc}_{\mathfrak{D}}$ works on an input $I$ as follows:
\begin{itemize}
	\item[(a)] First, as in the honest $A^{cc}_{\mathfrak{D}}$, we search for a cycle of some size $n\geq 2$. When the cycle is found, start outputting the conjecture ``$\langle n-1,0\rangle$.''
	
	\item[(b)] After that stage, assume that we find a finite piece of evidence (provided by $I$) showing that $G(I)\not\cong G_{n-1}$: e.g., we see that
	\begin{itemize}
		\item $G(I)$ contains a component of size at least $n+1$, or
		
		\item $G(I)$ contains a vertex of degree at least $3$, or
		
		\item $G(I)$ contains a cycle of size at most $n-1$.
	\end{itemize}
	Then we start outputting the conjecture ``$0$.''
\end{itemize}

\smallskip

The learning algorithms $A^{e}_{\mathfrak{D}}$ and $A^{cc}_{\mathfrak{D}}$ can be unified in a general framework as follows. One can consider an arbitrary superclass $\mathfrak{K}\supseteq \mathfrak{D}$. We assume that the class $\mathfrak{K}$ is \emph{uniformly enumerable}, i.e., there is a uniformly computable sequence of structures $(\mathcal{N}_e)_{e\in\omega}$ such that:
\begin{enumerate}
	\item Any structure from $\mathfrak{K}$ is isomorphic to some $\mathcal{N}_e$.
	
	\item For every $e$, $\mathcal{N}_e$ belongs to $\mathfrak{K}$.
\end{enumerate}
Then for a number $e\in\omega$, the conjecture ``$e$'' is interpreted as ``the input structure is isomorphic to $\mathcal{N}_e$.'' 



\subsection{Learning families of structures: Formal details} \label{subsect:formal}

We are now in a position of offering the formal definition of our learning paradigm: see Definition~\ref{definition:main} for the definition of the learning type $\Inf\Ex_{\cong}$.   

We begin with  the necessary formal preliminaries.

\smallskip

Let $L = \{ P^{n_0}_0, P^{n_1}_1, \dots, P^{n_k}_k\}$ be a relational signature. An $L$-\emph{informant} is a function
\[
	I \colon \omega \to (\omega^{n_0} \times \{ 0,1\}) \times (\omega^{n_1} \times \{ 0,1\}) \times \dots \times (\omega^{n_k} \times \{ 0,1\}).
\]
For a number $m$, the value $I(m)$ is treated as a $(k+1)$-tuple
\[
	I(m) = (I_0(m), I_1(m), \dots, I_k(m)),
\]
where $I_j(m) \in \omega^{n_j}\times \{ 0,1\}$. Let $\cont^{+}_j(I) := \{ \bar a\in \omega^{n_j} \,\colon (\bar a, 1) \in \range(I_j)\}$. That is, $\cont^{+}_j(I)$ is the set of all positive
examples of predicate $P_j$.

 The \emph{positive content} of the informant $I$ is the tuple
\[
	\cont^{+}(I) = (\cont^+_0(I), \cont^+_1(I),\dots, \cont^+_k(I)).
\]

Henceforth, for the sake of readability, we will often omit the arities of predicates. For an $L$-informant $I$ and an $L$-structure $\mathcal{S} = (\omega;P_0,P_1,\dots,P_k)$, we say that $I$ is an \emph{informant for} $\mathcal{S}$ if for every $i\leq k$, $\cont^+_i(I) = P_i$. By $\Inf(\mathcal{S})$ we denote the set of all informants for the structure $\mathcal{S}$. Observe that each informant, so defined, offers all positive, as well as all negative, data of the target structure.

If a signature $L$ contains functional symbols and/or constants, then one can use a standard convention from computable structure theory: by replacing functions with their graphs, we can treat any $L$-structure as a relational one. If a signature $L$ is clear from the context,  then we will talk about informants without specifying their prefix $L$-.

For a number $n$ and a function $f$ with $\dom(f)=\omega$, by $f[n]$ we denote the finite sequence $f(0),f(1),\dots,f(n-1)$.

A \emph{learner} is a function $M$ mapping initial segments of informants to conjectures (elements of $\omega \cup \set{?}$). The \emph{learning sequence} of a learner $M$ on an informant $I$ is the function $p\colon \omega\to \omega \cup \set{ ?}$ such that $p(n) = M(I[n])$ for every $n$.

Let $\sigma=(\sigma_1,\dots, \sigma_j,\dots, \sigma_k)$ be an initial part of an $L$-informant. By $\mathcal{A}_{\sigma}$ we denote the finite structure which is defined as follows:
The domain of $\mathcal{A}_{\sigma}$ is the greatest (under set-theoretic inclusion) set $D\subset\omega$ with the following properties: 
	\begin{itemize}
		\item[(a)] Every $x\in D$ is mentioned in $\sigma$, i.e., there are numbers $m<|\sigma|$, $j\leq k$, and a tuple $\bar a$ such that $x$ occurs in $\bar a$ and $\sigma_j(m)$ is equal to either $(\bar a, 0)$ or $(\bar a, 1)$.
		
		\item[(b)] If $j\leq k$ and $\bar b$ is a tuple from $D$ such that $|\bar b| = n_j$, then there is (the least) $m<|\sigma|$ with $\sigma_j(m) \in \{  (\bar b, 0), (\bar b, 1)\}$. 
	\end{itemize}
		
		The predicates on $\mathcal{A}_{\sigma}$ are recovered from the string $\sigma$ in a natural way: If $\sigma_j(m) = (\bar b, 1)$, then we set $\mathcal{A}_{\sigma} \models P_j(\bar b)$. Otherwise, we define $\mathcal{A}_{\sigma} \models \neg P_j(\bar b)$.
		
\smallskip		
		
	Informally speaking, the structure $\mathcal{A}_{\sigma}$ is constructed according to the following principle: We want to mine as much information from $\sigma$ as possible, but this information must induce a \emph{complete} diagram (of a finite structure). 
	
	Note that $\mathcal{A}_{\sigma}$ is allowed to be an empty $L$-structure. Nevertheless, if $I$ is an $L$-informant for a non-empty structure $\mathcal{B}$, then there is a stage $s_0$ such that for all $s\geq s_0$, we have $\mathcal{A}_{I[s]}\neq \emptyset$. Furthermore, it is clear that 
	\[
		\mathcal{A}_{I[s]} \subseteq \mathcal{A}_{I[s+1]} \text{ and } 
		\mathcal{B} = \bigcup_{s\in\omega}\mathcal{A}_{I[s]}.
	\]
	
	\begin{defn}
		Let $\mathfrak{K}$ be a class of $L$-structures. An \emph{effective enumeration} of the class $\mathfrak{K}$ is a function $\nu \colon \omega \to \mathfrak{K}$ with the following properties:
		\begin{enumerate}
			\item The sequence of $L$-structures $(\nu(e))_{e\in\omega}$ is uniformly computable.
			
			\item For any $\mathcal{A}\in\mathfrak{K}$, there is an index $e$ such that the structures $\mathcal{A}$ and $\nu(e)$ are isomorphic.
		\end{enumerate}
		In other words, the function $\nu$ effectively lists all isomorphism types from the class $\mathfrak{K}$ (possibly listing also other $L$-structures).
	\end{defn}
	
	Sometimes we abuse our notations: we assume that the notions ``enumeration'' and ``effective enumeration'' are synonymous. If $\nu$ and $\mu$ are two enumerations, then a new enumeration $\nu \oplus \mu$ is defined as follows.
	\[
		(\nu \oplus \mu)(2n) := \nu(n), \text{ and } (\nu \oplus \mu)(2n+1) := \mu(n).
	\]
	
	\begin{defn}
		Let $\nu$ be an effective enumeration of a class $\mathfrak{K}$, and let $\mathcal{A}$ be a structure from $\mathfrak{K}$. The \emph{index set of the structure $\mathcal{A}$ w.r.t.} $\nu$ is defined as follows:
		\[
			Ind(\mathcal{A};\nu) = \{ e\in\omega\,\colon \nu(e) \cong \mathcal{A}\}.
		\]
	\end{defn}
	
	We say that an effective enumeration $\nu$ is \emph{decidable} if the set 
	\[
		\{ (i,j)\,\colon \nu(i) \cong \nu(j)\}
	\]
	is computable. An effective enumeration $\nu$ is \emph{Friedberg}  if $\nu(i) \not\cong \nu(j)$ for all $i\neq j$ (Friedberg~\cite{friedberg1958three}  proved that there is an effective enumeration of all c.e.\ sets with no repetitions).
	
	\begin{remark}
		Note that any Friedberg enumeration is decidable. Moreover, if $\nu$ is a decidable enumeration of a class $\mathfrak{K}$, then for any $\mathcal{A} \in \mathfrak{K}$, its index set $Ind(\mathcal{A};\nu)$ is computable.
	\end{remark}
	
	Now we are ready to give the notion of informant learning:
	
	\begin{defn}\label{definition:main}
		Let $\mathfrak{K}$ be a class of $L$-structures, and let $\nu$ be an effective enumeration of $\mathfrak{K}$. Suppose that $\mathfrak{C}$ is a subclass of $\mathfrak{K}$. We say that $\mathfrak{C}$ is \emph{$\Inf\Ex_{\cong}[\nu]$-learnable} if there is a learner $M$ with the following property: If $I$ is an informant for a structure $\mathcal{A}\in\mathfrak{C}$, then there are $e$ and $s_0$ such that $\nu(e) \cong \mathcal{A}$ and $M(I[s]) = e$ for all $s\geq s_0$. In other words, in the limit, the learner $M$ learns all isomorphism types from $\mathfrak{C}$.
	\end{defn}
	
	Recall that the classes $\mathfrak{K}$ and $\mathfrak{C}$ are closed under isomorphisms. Hence, we emphasize that every structure $\mathcal{A}\in\mathfrak{C}$ has a computable copy, \emph{but} both the atomic diagram of $\mathcal{A}$ and an informant $I$ can have \emph{arbitrary} Turing degree. 
	
	We say that an $L$-structure $\mathcal{A}$ is \emph{$\Inf\Ex_{\cong}[\nu]$-learnable} if the class $\{ \mathcal{A}\}$ (or more formally, the class containing all isomorphic copies of $\mathcal{A}$) is $\Inf\Ex_{\cong}[\nu]$-le\-ar\-nable. Observe that every family $\mathfrak{C}$ consisting of a single isomorphism type $[\cA]_{\cong}$ is $\Inf\Ex_{\cong}[v]$-learnable: a learner just constantly outputs $\cA$.
	
	In this paper, we concentrate only on learning the isomorphism types of structures. Note that in~\cite{FKS-ta}, the learning notions were given for an arbitrary equivalence relation $\sim$ on a class $\mathfrak{K}$. 
	
		\begin{remark}
It might be natural to regard  the classical setting  of learning c.e.\ languages as a special case of our paradigm for learning computable structures. Yet, let us stress again that our framework is designed for modelling learning up to isomorphism (as opposed to the learning of a  given presentation of data, in Gold-style~\cite{Gold67}). So, since we assume that each structure considered has domain $\omega$, the only set that can be a target structure in our framework is $\omega$.
	\end{remark}
	


\subsection{Locking sequences}\label{subsect:locking}

The paper~\cite{FKS-ta} is focused on different versions of learning for various classes of equivalence structures. Here we briefly recap the results of~\cite{FKS-ta} on locking sequences, but now we formulate them for arbitrary classes of structures. The notion of a locking sequence was introduced by Blum and Blum~\cite{blum1975toward}.

\smallskip

We say that a finite sequence $\sigma$ \emph{describes a finite part} of an $L$-structure $\mathcal{A}$ if $\sigma$ is an initial segment of some $L$-informant for the structure $\mathcal{A}$. Note that since we are working with informant learning, $\sigma$ contains both positive and negative data about the structure $\mathcal{A}$.

\begin{defn}[{\cite[Definition 17]{FKS-ta}}]
	Suppose that $M$ is a learner and $\mathcal{A}$ is an $L$-stru\-c\-ture. A sequence $\sigma$ describing a finite part of $\mathcal{A}$ is a \emph{weak informant locking sequence of $M$ on $\mathcal{A}$} if for every $\tau \supseteq \sigma$ describing a finite part of $\mathcal{A}$, we have $M(\tau) = M (\sigma)$.
\end{defn}

\begin{thm}[{\cite[Theorem~18]{FKS-ta}}]\label{theo:weak-lock-001}
	Let $\nu$ be an effective enumeration of a class $\mathfrak{K}$, and let $\mathcal{A}$ be a structure from $\mathfrak{K}$. Suppose that  a learner $M$ $\Inf\Ex_{\cong}[\nu]$-learns the structure $\mathcal{A}$. Let $\sigma_0$ be a sequence which describes a finite part of $\mathcal{A}$. Then there is a finite sequence $\sigma\supseteq \sigma_0$ such that $\sigma$ is a weak informant locking sequence of $M$ on $\mathcal{A}$. Furthermore, $\nu(M(\sigma)) \cong \mathcal{A}$.
\end{thm}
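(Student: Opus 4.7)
The plan is to argue by contradiction: assuming no extension of $\sigma_0$ is a weak locking sequence, I construct an informant $I$ for $\mathcal{A}$ on which the learner $M$ changes its conjecture infinitely often, contradicting the assumption that $M$ $\Inf\Ex_{\cong}[\nu]$-learns $\mathcal{A}$. Once weak locking sequences exist, the second clause follows by extending any such $\sigma$ to a full informant for $\mathcal{A}$ and invoking learnability.

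More concretely, fix an informant $J\in \Inf(\mathcal{A})$ which extends $\sigma_0$; such a $J$ exists because $\sigma_0$ describes a finite part of $\mathcal{A}$. Suppose towards contradiction that no $\sigma\supseteq \sigma_0$ describing a finite part of $\mathcal{A}$ is a weak informant locking sequence. I recursively build a chain $\sigma_0 \subseteq \tau_0 \subseteq \sigma_1 \subseteq \tau_1 \subseteq \sigma_2 \subseteq \cdots$ of initial segments, each describing a finite part of $\mathcal{A}$, as follows. Having obtained $\sigma_n$, use the failure of the weak locking property to pick $\tau_n \supseteq \sigma_n$ describing a finite part of $\mathcal{A}$ with $M(\tau_n) \neq M(\sigma_n)$. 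Then extend $\tau_n$ to $\sigma_{n+1}$ by appending the next $|\tau_n|+1$ many values $J(k), J(k+1), \dots$ of the fixed informant $J$, starting from the first index $k$ not yet used (this is possible since $\tau_n$, being an initial segment of some informant for $\mathcal{A}$, agrees with $J$ on the bits it has fixed; and any further block of $J$ can be concatenated to an initial segment describing a finite part of $\mathcal{A}$, yielding another such segment).

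Let $I = \bigcup_n \sigma_n$. The interleaving with pieces of $J$ guarantees that the positive content of $I$ coincides with that of $J$ on every argument, so $\cont^{+}_j(I) = P_j^{\mathcal{A}}$ for each $j\leq k$, whence $I \in \Inf(\mathcal{A})$. On the other hand, by construction $M(I[|\tau_n|]) = M(\tau_n) \neq M(\sigma_n) = M(I[|\sigma_n|])$ for every $n$, so the learning sequence of $M$ on $I$ changes its value infinitely often and never stabilizes. This contradicts the hypothesis that $M$ $\Inf\Ex_{\cong}[\nu]$-learns $\mathcal{A}$. Hence a weak informant locking sequence $\sigma \supseteq \sigma_0$ exists.

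For the final assertion, take such a $\sigma$ and any informant $I^{*}\in \Inf(\mathcal{A})$ with $I^{*}\supseteq \sigma$ (obtained by appending any enumeration of the remaining positive and negative atomic facts about $\mathcal{A}$). For every $n\geq |\sigma|$ the initial segment $I^{*}[n]$ extends $\sigma$ and describes a finite part of $\mathcal{A}$, so the locking property gives $M(I^{*}[n]) = M(\sigma)$. Thus the learning sequence of $M$ on $I^{*}$ stabilizes to $M(\sigma)$, and the definition of $\Inf\Ex_{\cong}[\nu]$-learning forces $\nu(M(\sigma)) \cong \mathcal{A}$. The main delicate point in the plan is the construction step: ensuring that interleaving ``mind-change witnesses'' $\tau_n$ with pieces of a reference informant $J$ still yields a genuine informant for $\mathcal{A}$ in the limit, rather than merely a sequence of locally-consistent finite structures whose union omits infinitely many facts about $\mathcal{A}$.
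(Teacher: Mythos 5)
Your proof is correct and follows essentially the same route as the paper's own (sketched) argument: assume no weak locking extension of $\sigma_0$ exists, interleave mind-change witnesses with blocks of a fixed reference informant $J$ to build an informant for $\mathcal{A}$ on which $M$ diverges, and then derive $\nu(M(\sigma))\cong\mathcal{A}$ by extending a locking sequence to a full informant; you in fact supply the detail the paper leaves to the phrase ``in an appropriate way.'' The only blemish is the parenthetical claim that $\tau_n$ ``agrees with $J$ on the bits it has fixed'' --- different informants for $\mathcal{A}$ need not agree positionally --- but your real justification (a block of $J$-values concatenated onto any segment describing a finite part of $\mathcal{A}$ again describes a finite part of $\mathcal{A}$, since $J$ only marks positive the tuples actually in the relations) is the correct one and suffices.
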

\begin{proof}[Proof Sketch]
	Towards a contradiction, suppose that there is $\sigma_0$ with no weak locking sequence $\sigma\supseteq \sigma_0$. Then for any $\sigma \supseteq \sigma_0$ describing a finite part of $\mathcal{A}$, there is a string $ext(\sigma) \supset\sigma$ such that $ext(\sigma)$ also describes a finite part of $\mathcal{A}$, and $M({ext(\sigma)}) \neq M ({\sigma})$. 
	
	Fix an informant $I$ for $\mathcal{A}$. Then one can  produce a new informant $I'$ for $\mathcal{A}$ such that the learner $M$ does not correctly converge on $I'$: Just ``alternate'' between the data given by $I$ and ``bad'' extensions $ext(\sigma)$, in an appropriate way.
\end{proof}

\begin{defn}[{\cite[Definition~19]{FKS-ta}}]\label{def:inf-lock}
	Let $M$ be a learner and $\mathcal{A}$ be an $L$-structure. We say that $M$ is \emph{informant locking} on $\mathcal{A}$ if for every informant $I$ for $\mathcal{A}$, there is an $n$ such that $I[n]$ is a weak informant locking sequence for $M$ on $\mathcal{A}$. 
		Assume that a class $\mathfrak{A}$ is $\Inf\Ex_{\cong}[\nu]$-learnable. A learner $M$ which $\Inf\Ex_{\cong}[\nu]$-le\-arns $\mathfrak{A}$ is \emph{informant locking} if it is informant locking for every $\mathcal{A}\in\mathfrak{A}$.
\end{defn}

\begin{thm}[see Theorem~20 in {\cite{FKS-ta}}]\label{theo:weak-lock-002}
	If a class $\mathfrak{A}$ is $\Inf\Ex_{\cong}[\nu]$-le\-ar\-na\-ble, then there is an informant locking learner $M$ which $\Inf\Ex_{\cong}[\nu]$-learns $\mathfrak{A}$.
\end{thm}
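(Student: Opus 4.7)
The plan is to convert $M$ into an informant-locking learner $M'$ by a stickiness construction: $M'$ commits to a hypothesis and retains it as long as the incoming data remains consistent with it. Concretely, define $M'$ recursively by $M'(\emptyset) := \ ?$ and, for each $\sigma$ and one-step extension $x$,
\[
M'(\sigma \cdot x) := \begin{cases} M'(\sigma), & \text{if } M'(\sigma) \neq\ ? \text{ and } \sigma \cdot x \text{ describes a finite part of } \nu(M'(\sigma)), \\ M(\sigma \cdot x), & \text{otherwise.} \end{cases}
\]

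I would first verify that $M'$ still $\Inf\Ex_{\cong}[\nu]$-learns $\mathfrak{A}$. For $\mathcal{A} \in \mathfrak{A}$ and an informant $I$ for $\mathcal{A}$, fix $c$ with $\nu(c) \cong \mathcal{A}$ and $M(I[n]) = c$ for all $n \geq n_0$. Any conjecture $e$ held by $M'$ with $\nu(e)$ diverging from $\mathcal{A}$'s diagram (as revealed by $I$) is eventually refuted along $I$, causing $M'$ to fall through to $M$'s current guess. Past stage $n_0$ this guess is $c$ (or some iso-equivalent index), and the stickiness clause then preserves this conjecture along $I$.

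For the informant-locking property, suppose $M'(I[n_1]) = \hat c$ stably, with $\nu(\hat c) \cong \mathcal{A}$. For any $\tau \supseteq I[n_1]$ describing a finite part of $\mathcal{A}$, I would argue by induction on $j \in [n_1, |\tau|]$ that $M'(\tau[j]) = \hat c$: at each step, $\tau[j+1]$ extends $\tau[j]$ by atomic data consistent with $\nu(\hat c)$, so the first clause of the recursion fires, and $M'(\tau[j+1]) = \hat c$. Hence $I[n_1]$ is a weak informant locking sequence of $M'$ on $\mathcal{A}$ in the sense of Definition~\ref{def:inf-lock}.

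The main obstacle is that $\mathcal{A}$ may have arbitrary Turing degree, whereas each $\nu(e)$ is a computable presentation; no $\nu(\hat c)$ need then pointwise agree with $\mathcal{A}$, and the literal consistency check can fail infinitely often, breaking stickiness and causing $M'$ to oscillate between iso-equivalent indices. I plan to repair this by replacing the literal consistency check with an isomorphism-invariant one, requiring that $\mathcal{A}_{\sigma \cdot x}$ embed into $\nu(M'(\sigma))$ as a finite substructure. Pure embeddability is too permissive for refuting iso-incorrect hypotheses that happen to share all finite substructures with $\mathcal{A}$ (e.g.\ $\mathbb{Q}$ versus $\mathbb{Q}\cup\{\pm\infty\}$), so I would supplement it by an auxiliary trigger that breaks stickiness whenever $M(\sigma \cdot x)$ disagrees with $M'(\sigma)$. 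Theorem~\ref{theo:weak-lock-001} is then invoked to guarantee that weak locking sequences of $M$ on $\mathcal{A}$ exist extending any given finite part, which provides the correct indices on which $M'$ ultimately stabilizes, so that the stickiness clause yields the desired locking property.
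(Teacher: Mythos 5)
Your sticky construction does not survive your own repair. You are right that the literal test ``$\sigma\cdot x$ describes a finite part of $\nu(M'(\sigma))$'' is hopeless: an informant $I$ for $\mathcal{A}$ carries the positive diagram of the particular copy $\mathcal{A}$, not of the computable presentation $\nu(\hat c)$, so for a typical copy even the correct conjecture is eventually refuted along $I$; and the same confusion infects your locking argument, since $\tau\supseteq I[n_1]$ describing a finite part of $\mathcal{A}$ gives no control over whether it describes a finite part of $\nu(\hat c)$, so the inductive step ``the first clause fires'' fails and $M'(\tau)$ falls back to $M(\tau)$, which on a finite string need not equal $\hat c$. The proposed repair is worse: if stickiness is broken whenever $M(\sigma\cdot x)\neq M'(\sigma)$, then in the only case where the first clause can apply you already have $M'(\sigma\cdot x)=M'(\sigma)=M(\sigma\cdot x)$, and in every other case $M'(\sigma\cdot x)=M(\sigma\cdot x)$; hence $M'$ coincides with $M$ on all nonempty strings and inherits exactly the failure of informant locking you set out to remove. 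The embeddability test cannot stand alone either, as you concede, since it would freeze $M'$ forever on an index $e$ with $\nu(e)\not\cong\mathcal{A}$ whenever every finite substructure of $\mathcal{A}$ embeds into $\nu(e)$, destroying $\Inf\Ex_{\cong}[\nu]$-learning. Finally, Theorem~\ref{theo:weak-lock-001} only asserts that \emph{some} weak locking sequence of $M$ exists above any $\sigma_0$; it does not make initial segments of an arbitrary informant into locking sequences of $M'$, which is the entire content of the theorem you are proving, so invoking it at the end does not close the gap.

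The missing idea is to work at the level of data strings rather than structures. Let $M'(\sigma)$ search, in a fixed canonical order, for the least candidate $\tau$ with $|\tau|\leq|\sigma|$ and positive content contained in that of $\sigma$ such that no $\tau'\supseteq\tau$ within the same bounds has $M(\tau')\neq M(\tau)$, and output $M(\tau)$ for that candidate (outputting $?$ if none exists). Refutations of candidates persist as the data grows; a genuine weak informant locking sequence $\tau^{\ast}$ of $M$ on $\mathcal{A}$, which exists by Theorem~\ref{theo:weak-lock-001}, is never refuted by data consistent with $\mathcal{A}$, so along any informant $I$ for $\mathcal{A}$ the search stabilizes on such a $\tau^{\ast}$ with $\nu(M(\tau^{\ast}))\cong\mathcal{A}$. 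Once $I[n]$ both exhibits $\tau^{\ast}$ within its bounds and refutes every smaller candidate, any $\tau\supseteq I[n]$ describing a finite part of $\mathcal{A}$ selects the same candidate and returns the same value, which is precisely the informant locking property. This presentation-independent search is what your consistency and embeddability tests are trying, and failing, to simulate.
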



\subsection{Infinitary formulas}\label{subsect:infinitary}

Suppose that $X \subseteq \omega$ is an oracle, and $\alpha$ is an $X$-computable non-zero ordinal. Following  Chapter~7 of~\cite{AK00}, we describe the class of $X$-computable infinitary $\Sigma_{\alpha}$ formulas (or $\Sigma^c_{\alpha}(X)$ formulas, for short) in a signature $L$.
\begin{itemize}
	\item[(a)] $\Sigma^c_0(X)$  and $\Pi^c_0(X)$ formulas are quantifier-free first-order $L$-formulas.
	
	\item[(b)] A $\Sigma_{\alpha}^c(X)$ formula $\psi(x_0,\dots,x_m)$ is an $X$-computably enumerable ($X$-c.e.) disjunction
	\[
		\underset{i\in I}{\bigvee\skipmm{4}\bigvee} \exists \bar y_i \xi_i(\bar x, \bar y_i),
	\]
	where each $\xi_i$ is a $\Pi^c_{\beta_i}(X)$ formula, for some $\beta_i< \alpha$.
	
	\item[(c)] A $\Pi_{\alpha}^c(X)$ formula $\psi(\bar x)$ is an $X$-c.e. conjunction
	\[
		\underset{i\in I}{\bigwedge\skipmm{4}\bigwedge} \forall \bar y_i \xi_i(\bar x, \bar y_i),
	\]
	where each $\xi_i$ is a $\Sigma^c_{\beta_i}(X)$ formula, for some $\beta_i < \alpha$.
\end{itemize}

In the paper, we mainly work with $\Sigma^{c}_{\alpha}(X)$ formulas for finite ordinals $\alpha$ (even more, for $\alpha \leq 2$). Henceforth, in this section we assume that $\alpha = n$ is a natural number.

\emph{Infinitary $\Sigma_n$ formulas} (or \emph{$\Sigma^{\infi}_{n}$ formulas}, for short) are defined in the same way as above, modulo the following modification: infinite disjunctions and conjunctions are not required to be $X$-c.e. It is clear that a formula $\psi$ is logically equivalent to a $\Sigma^{\infi}_{n}$ formula iff $\psi$ is equivalent to a $\Sigma^c_{n}(X)$ formula for some oracle $X$. A similar fact holds for $\Pi^{\infi}_{n}$ formulas. For more details on infinitary formulas, we refer the reader to~\cite{AK00}.

 As usual, the \emph{$\Sigma^{\infi}_n$-theory}
 of an $L$-structure $\mathcal{S}$ is the set
\[
	\Sigma^{\infi}_n \text{-} Th(\mathcal{S}) = \{ \psi\,\colon \psi \text{ is a } \Sigma^{\infi}_n \text{ sentence true in } \mathcal{S} \}.
\]


\section{Learning from informant, and infinitary $\Sigma_2$-theories}\label{sec:main_characterization}

In this section, we offer a model-theoretic characterization of what families of structures are $\Inf\Ex_{\cong}[\nu]$-learnable: Informally speaking, we show that a family of structures $\mathfrak{K}$ is $\Inf\Ex_{\cong}[\nu]$-learnable if and only if the (isomorphism types of) structures from $\mathfrak{K}$ can be distinguished in terms of their $\Sigma^{\infi}_2$-the\-o\-ries. 

Suppose that $\mathfrak{K}_0$ is a class of $L$-structures, and $\nu$ is an effective enumeration of the class $\mathfrak{K}_0$.

\begin{thm}\label{thm:Sigma_2-theories}
	Let $\mathfrak{K} = \{ \mathcal{B}_i\,\colon i\in\omega\}$ be a family of structures  
	such that $\mathfrak{K}\subseteq \mathfrak{K}_0$, and the structures $\mathcal{B}_i$ are infinite and pairwise non-isomorphic.	
	Then the following conditions are equivalent:
\begin{enumerate}
	\item The class $\mathfrak{K}$ is $\Inf\Ex_{\cong}[\nu]$-learnable.

	\item There is a sequence of $\Sigma^{\infi}_2$ sentences $\{ \psi_i\,\colon i\in\omega\}$ such that for all $i$ and $j$, we have $\mathcal{B}_j\models \psi_i$ if and only if $i=j$.
\end{enumerate}	
\end{thm}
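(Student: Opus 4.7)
The plan is to prove the two implications separately, with the locking-sequence machinery of Section~2.5 central to both. Write $E_i := \{e : \nu(e) \cong \mathcal{B}_i\}$; these sets are pairwise disjoint, and each is non-empty since $\mathfrak{K}\subseteq\mathfrak{K}_0$.

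For $(2)\Rightarrow(1)$: write each $\psi_i$ as $\bigvee_{k}\exists\bar{y}_{i,k}\,\xi_{i,k}(\bar{y}_{i,k})$ with $\xi_{i,k}$ a $\Pi^{\infi}_1$ formula, and fix $e_i\in E_i$. The learner, on input $I[s]$, enumerates triples $(i,k,\bar{a})$ with entries of $\bar{a}$ in $\omega$ according to a fixed order, and declares $(i,k,\bar{a})$ \emph{refuted} at stage $s$ if the atomic data available in $I[s]$ already exhibits a tuple witnessing $\neg\xi_{i,k}(\bar{a})$ (i.e., a counterexample to one of the universal conjuncts of $\xi_{i,k}$); otherwise it is \emph{live}. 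The output is $e_{i_0}$ where $(i_0,k_0,\bar{a}_0)$ is the least live triple. If $\mathcal{A}\cong\mathcal{B}_{i^\ast}$, then $\mathcal{A}\models\psi_{i^\ast}$ supplies a genuinely satisfied triple $(i^\ast,k^\ast,\bar{a}^\ast)$ that is never refuted, while every triple $(i,k,\bar{a})$ with $i\ne i^\ast$ has $\mathcal{A}\not\models\xi_{i,k}(\bar{a})$ (since $\mathcal{A}\not\models\psi_i$) and so gets refuted in finite time. Hence the $i$-coordinate of the least live triple stabilizes at $i^\ast$, and the learner converges to $e_{i^\ast}$.

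For $(1)\Rightarrow(2)$: by Theorem~\ref{theo:weak-lock-002}, assume $M$ is informant-locking, and then replace it by $M'(\sigma):=M(\mathrm{canon}(\sigma))$, where $\mathrm{canon}(\sigma)$ relabels the elements of $\sigma$ by their order of first appearance. A short verification shows that $M'$ is again informant-locking and $\Inf\Ex_{\cong}[\nu]$-learns $\mathfrak{K}$, and is \emph{label-invariant}: $M'(\pi\sigma)=M'(\sigma)$ for every permutation $\pi$ of $\omega$. Call an abstract atomic type $\mathcal{T}'(\bar{x},\bar{y})$ extending $\mathcal{A}_\sigma(\bar{x})$ \emph{bad for} $(\sigma,i)$ if some $\tau\supseteq\sigma$ realizes $\mathcal{T}'$ (i.e.\ $\mathcal{A}_\tau$ has abstract type $\mathcal{T}'$) with $M'(\tau)\notin E_i$. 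With $\delta_\sigma(\bar{x})$ and $\delta_{\mathcal{T}'}(\bar{x},\bar{y})$ denoting the usual quantifier-free diagram formulas, set
\[
\psi_i \;:=\; \bigvee_{\sigma\,:\,M'(\sigma)\in E_i}\exists \bar{x}\,\Bigl[\delta_\sigma(\bar{x})\wedge \bigwedge_{\mathcal{T}'\text{ bad for }(\sigma,i)}\forall \bar{y}\,\neg\delta_{\mathcal{T}'}(\bar{x},\bar{y})\Bigr].
\]
The bracketed formula is quantifier-free conjoined with $\Pi^{\infi}_1$, hence $\Pi^{\infi}_1$; so $\psi_i$ is $\Sigma^{\infi}_2$. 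To show $\mathcal{B}_i\models\psi_i$: take an informant for $\mathcal{B}_i$ and apply Theorems~\ref{theo:weak-lock-001}--\ref{theo:weak-lock-002} to obtain a weak locking initial segment $\sigma^\ast$ with $M'(\sigma^\ast)\in E_i$, and use $(\sigma^\ast,\bar{c}_{\sigma^\ast})$ as witness. For an arbitrary realized type $\mathcal{T}'$ at $(\bar{c}_{\sigma^\ast},\bar{b})$ in $\mathcal{B}_i$ and any $\tau\supseteq\sigma^\ast$ realizing $\mathcal{T}'$ at $(\bar{c}_{\sigma^\ast},D)$, pick a permutation $\pi$ of $\omega$ fixing $\bar{c}_{\sigma^\ast}$ and sending $D\mapsto\bar{b}$; then $\pi\tau$ describes $\mathcal{B}_i\upharpoonright(\bar{c}_{\sigma^\ast}\cup\bar{b})$ and extends $\pi\sigma^\ast=\sigma^\ast$, so locking gives $M'(\pi\tau)=M'(\sigma^\ast)\in E_i$, while label-invariance gives $M'(\tau)=M'(\pi\tau)\in E_i$; no realized type is bad. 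To show $\mathcal{B}_j\not\models\psi_i$ when $j\ne i$: any witness $(\sigma_0,\bar{b})$ in $\mathcal{B}_j$ can be transported via a relabeling $h$ with $h(\bar{b})=\bar{c}_{\sigma_0}$ to a copy $\mathcal{B}_j'$ whose restriction to $\bar{c}_{\sigma_0}$ is $\mathcal{A}_{\sigma_0}$. An informant $I$ for $\mathcal{B}_j'$ beginning with $\sigma_0$ satisfies, at every $n\ge|\sigma_0|$, that $\mathcal{A}_{I[n]}$ realizes a type that the hypothesis classifies as not bad, so $M'(I[n])\in E_i$; but $M'$ converges on $I$ to an index in $E_j$, so $E_i\cap E_j\ne\emptyset$, contradicting $i\ne j$.

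The main technical obstacle is aligning the concrete, bit-level behavior of $M$ with the abstract, isomorphism-invariant character demanded of a $\Sigma^{\infi}_2$ sentence. A naive definition of ``bad type'' that refers to the specific elements of $\omega$ occurring in $\tau$ would make $\psi_i$ fail to be a proper sentence (its truth would depend on the choice of representative) and would also derail the transport argument for $\mathcal{B}_j\not\models\psi_i$. The canonicalization step is precisely what neutralizes this: after it, $M$'s output depends only on the abstract order-type of its input, so ``bad'' becomes an intrinsic property of abstract atomic types and $\psi_i$ is indeed isomorphism-invariant.
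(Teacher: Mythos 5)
Your $(2)\Rightarrow(1)$ direction is essentially the paper's own argument: the paper also fixes indices $e_i$ with $\nu(e_i)\cong\mathcal{B}_i$ and has the learner output $e_i$ for the least pair $\langle i,\bar a\rangle$ such that $\psi_i$ is still ``compatible'' with the finite structure read so far, with correctness following from the monotonicity of refutation; your explicit handling of the outer infinite disjunction via triples $(i,k,\bar a)$ is if anything slightly more careful than the paper's ``w.l.o.g.'' normal form. The interesting divergence is in $(1)\Rightarrow(2)$. The paper does \emph{not} build the sentences $\psi_i$ by hand: it uses the learner to construct a Turing $X$-computable embedding of $\mathfrak{K}$ into an auxiliary class of ``marker'' structures $\mathcal{S}_i$ (Proposition~\ref{prop:exists-tc-emb}), where each $\mathcal{S}_i$ is separated from the others by the finitary $\exists\forall$-sentence $\exists x\forall y[P_i(y)\rightarrow x\leq y]$, and then pulls these sentences back through the embedding via the relativized Pullback Theorem (Corollary~\ref{corol:Pullback}) to obtain $\Sigma^c_2(X)$ sentences for an explicit oracle $X$. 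That route buys the effectivity bookkeeping that powers Corollary~\ref{coroll:complexity} (the complexity of the separating sentences, and hence of learners, is controlled by $M$ and the index sets), at the cost of importing the $tc$-embedding machinery. Your route is more self-contained and makes the role of $\Sigma^{\infi}_2$ transparent: the existential block pins down a locking configuration, and the $\Pi^{\infi}_1$ block forbids any extension type that could dislodge the learner. I believe it is correct, modulo two points you should tighten. First, the claim that $M'$ is again informant-locking is not obviously true (a weak locking sequence is defined relative to a \emph{specific} copy with domain $\omega$, and canonicalization changes the copy as the input grows); fortunately you never need the full informant-locking property --- since $M'$ still $\Inf\Ex_{\cong}[\nu]$-learns $\mathfrak{K}$, Theorem~\ref{theo:weak-lock-001} applied directly to $M'$ already yields, for each $i$, a weak locking sequence $\sigma^\ast$ on $\mathcal{B}_i$ with $M'(\sigma^\ast)\in E_i$, which is all your argument uses. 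Second, the ``abstract type'' of $\tau$ must record the entire partial atomic diagram decided by $\tau$ (including tuples involving elements outside $\dom(\mathcal{A}_\tau)$), not just the completed finite substructure $\mathcal{A}_\tau$; otherwise two strings realizing the same type need not both be transportable into $\mathcal{B}_i$, and the permutation step would not guarantee that $\pi\tau$ describes a finite part of $\mathcal{B}_i$. Both repairs are routine and do not affect the shape of the proof.
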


 Theorem~\ref{thm:Sigma_2-theories} talks about classes $\mathfrak{K}$ which contain infinitely many isomorphism types. Nevertheless, one can easily formulate (and prove) an analogous result for classes with only finitely many isomorphism types: Just work with a family $\mathfrak{K} = \{ \mathcal{B}_0, \mathcal{B}_1, \dots, \mathcal{B}_n\}$ and the corresponding finite sequence of $\Sigma^{\infi}_2$ sentences $\{  \psi_0, \psi_1, \dots, \psi_n\}$.

\begin{remark}
The statement of Theorem~\ref{thm:Sigma_2-theories} is similar to a result due to Martin and Osherson~\cite[p.~79, Corollary~(52)]{Mar-Osh:b:98}. Yet, our proof is novel and based on a technique introduced by Knight, Miller, and Vanden Boom~\cite{KMV07} in the context of Turing computable embeddings. A main upshot of our approach is that it provides an upper bound for the Turing complexity of the learners (Corollary~\ref{coroll:complexity}), which will be crucial, in Section \ref{sec:applications}, for analyzing the learnability of familiar classes of structures.
\end{remark}

\smallskip

The proof of Theorem~\ref{thm:Sigma_2-theories} is organized as follows. Section~\ref{subsect:TC-emb} discusses the necessary preliminaries on Turing computable embeddings, which constitute one of the main ingredients of the proof. In Section~\ref{subsect:connect}, we give a result (Proposition~\ref{prop:exists-tc-emb}) which provides a connection between $\Inf\Ex_{\cong}$-le\-ar\-na\-bi\-li\-ty and Turing computable embeddings. Section~\ref{subsect:proof-finish} finishes the proof. Section~\ref{subsect:further-disc} discusses some further questions related to the proof.

\subsection{Turing computable embeddings} \label{subsect:TC-emb}

When we are working with Turing computable embeddings, we consider structures $\mathcal{S}$ such that the domain of $\mathcal{S}$ is an \emph{arbitrary} subset of $\omega$. In contrast, recall that our learning paradigm applies only to structures with domain equal to $\omega$. As before, any considered class of structures is closed under isomorphisms, modulo the domain restrictions.

Let $\mathfrak{K}_0$ be a class of $L_0$-structures, and $\mathfrak{K}_1$ be a class of $L_1$-structures.

\begin{defn}[{\cite{CCKM04,KMV07}}] \label{def:tc-embedding}
  A Turing operator $\Phi=\Phi_e$ is a \emph{Turing computable embedding} of $\mathfrak{K}_0$ into $\mathfrak{K}_1$, denoted by $\Phi\colon \mathfrak{K}_0 \leq_{tc} \mathfrak{K}_1$, if $\Phi$ satisfies the following:
  \begin{enumerate}
  	\item For any $\mathcal{A}\in \mathfrak{K}_0$, the function $\Phi^{D(\mathcal{A})}_e$ is the characteristic function of the atomic diagram of a structure from $\mathfrak{K}_1$. This structure is denoted by $\Phi(\mathcal{A})$.

  	\item For any $\mathcal{A},\mathcal{B}\in \mathfrak{K}_0$, we have $\mathcal{A}\cong\mathcal{B}$ if and only if $\Phi(\mathcal{A}) \cong \Phi(\mathcal{B})$.
  \end{enumerate}	
\end{defn} 

The term ``Turing computable embedding'' is often abbreviated as \emph{$tc$-em\-bed\-ding}. One of the important results in the theory of $tc$-embeddings is the following. Recall that $\omega^{CK}_1$ denotes the smallest ordinal which is noncomputable.

\begin{thm}[Pullback Theorem; Knight, Miller, and Vanden Boom~\cite{KMV07}] \label{thm:Pullback}
  Suppose that $\mathfrak{K}_0 \leq_{tc} \mathfrak{K}_1$ via a Turing operator $\Phi$. Then for any computable infinitary sentence $\psi$ in the signature of $\mathfrak{K}_1$, one can effectively find a computable infinitary sentence $\psi^{\star}$ in the signature of $\mathfrak{K}_0$ such that for all $\mathcal{A}\in \mathfrak{K}_0$, we have $\mathcal{A} \models \psi^{\star}$ if and only if $\Phi(\mathcal{A}) \models \psi$. Moreover, for a non-zero $\alpha <\omega^{CK}_1$, if $\psi$ is a $\Sigma^c_{\alpha}$ formula ($\Pi^c_{\alpha}$ formula), then so is $\psi^{\star}$.
\end{thm}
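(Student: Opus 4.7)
The plan is to strengthen the conclusion to allow free variables, and then induct on $\alpha$. Specifically, I will prove by induction on $\alpha$ with $1\leq \alpha < \omega^{CK}_1$ that for every computable infinitary $L_1$-formula $\psi(\bar y)$ of complexity $\Sigma^c_\alpha$ (resp.\ $\Pi^c_\alpha$), one can effectively produce, from an index of $\psi$ and a tuple $\bar b \in \omega^{|\bar y|}$, an $L_0$-sentence $\psi^\star_{\bar b}$ of the same complexity such that for every $\mathcal{A}\in\mathfrak{K}_0$,
\[
\mathcal{A}\models \psi^\star_{\bar b} \iff \bar b \subseteq \dom(\Phi(\mathcal{A})) \text{ and } \Phi(\mathcal{A})\models \psi(\bar b).
\]
The original theorem is the case $\bar y = \bar b = \emptyset$, in which the domain condition is vacuous.

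For the base case $\alpha = 1$, write a $\Sigma^c_1$ formula as $\psi(\bar y) = \underset{i\in I}{\bigvee\skipmm{4}\bigvee}\, \exists\bar z_i\, \xi_i(\bar y,\bar z_i)$ with $\xi_i$ quantifier-free and $I$ a c.e.\ index set. Then the required condition expands as: for some $i\in I$ and some $\bar c\in\omega^{|\bar z_i|}$, $\bar b\bar c \subseteq\dom(\Phi(\mathcal{A}))$ and $\Phi(\mathcal{A})\models \xi_i(\bar b,\bar c)$. By Definition~\ref{def:tc-embedding}(1), this amounts to $\varphi_e^{D(\mathcal{A})}$ outputting a prescribed finite pattern of bits, which by the Turing-functional use principle is witnessed by a finite quantifier-free $L_0$-diagram $\tau(\bar a)$ of some tuple $\bar a$ contained in $D(\mathcal{A})$. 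Collecting all such sufficient witnesses $(i,\bar c,\tau,\bar a)$ into a c.e.\ indexed disjunction yields a $\Sigma^c_1$ $L_0$-sentence equivalent to the desired condition; the $\Pi^c_1$ case is dual, replacing the existential witness by a universal condition and complementing the bit pattern.

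For the inductive step $\alpha>1$, write $\psi(\bar y) = \underset{i\in I}{\bigvee\skipmm{4}\bigvee}\, \exists\bar z_i\, \xi_i(\bar y,\bar z_i)$ with each $\xi_i$ a $\Pi^c_{\beta_i}$ formula for some $\beta_i<\alpha$. The induction hypothesis applied to $\xi_i$ produces, uniformly in $\bar c$, a $\Pi^c_{\beta_i}$ $L_0$-sentence $(\xi_i)^\star_{\bar b\bar c}$. Setting
\[
\psi^\star_{\bar b} \ := \ \underset{i\in I,\ \bar c \in \omega^{|\bar z_i|}}{\bigvee\skipmm{4}\bigvee} (\xi_i)^\star_{\bar b\bar c}
\]
gives a c.e.\ disjunction of $\Pi^c_{\beta_i}$ formulas, hence a $\Sigma^c_\alpha$ formula. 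The $\Pi^c_\alpha$ case is symmetric, using c.e.\ conjunctions of $\Sigma^c_{\beta_i}$ pullbacks.

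The main obstacle is mostly bookkeeping: one must carefully verify that the passage from the index of $\psi$ (together with the fixed index $e$ of $\Phi$) to the index of $\psi^\star_{\bar b}$ is uniformly computable, and that each disjunction or conjunction built during the induction really has a c.e.\ index set. These concerns reduce to a routine uniform effectivization together with the use principle for $\varphi_e$. The substantive content — correctness at the atomic level — is immediate from clause (1) of Definition~\ref{def:tc-embedding}, which asserts that $\varphi_e^{D(\mathcal{A})}$ is exactly the characteristic function of $D(\Phi(\mathcal{A}))$.
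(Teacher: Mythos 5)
The paper does not prove Theorem~\ref{thm:Pullback}: it is quoted as an external result of Knight, Miller, and Vanden Boom~\cite{KMV07}, so there is no internal argument to compare yours against. Evaluated on its own terms, your proposal has a genuine gap, and it sits exactly where the real content of the Pullback Theorem lies. Your strengthened inductive statement cannot hold as stated: for a fixed tuple $\bar b$ of natural numbers, the condition ``$\bar b\subseteq\dom(\Phi(\mathcal{A}))$ and $\Phi(\mathcal{A})\models\psi(\bar b)$'' is not invariant under replacing $\mathcal{A}$ by an isomorphic copy $\mathcal{A}'$ (the operator only guarantees $\Phi(\mathcal{A})\cong\Phi(\mathcal{A}')$, not equality of the output diagrams), whereas the truth value of the $L_0$-\emph{sentence} $\psi^{\star}_{\bar b}$ is isomorphism-invariant. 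Taking $\Phi$ to be the identity on graphs, $\psi(y)$ to say ``$y$ has a neighbour,'' $\bar b=(0)$, and two isomorphic copies of one graph that disagree on whether the vertex $0$ is isolated already refutes the claimed equivalence, so the induction cannot get off the ground in the parametrized form you propose.

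The same difficulty surfaces concretely in your base case. The use principle gives a finite oracle segment $\sigma$, i.e.\ a finite set of positive and negative atomic facts about \emph{specific} elements of $\omega$, such that $\varphi^{\sigma}_e$ already outputs the required bits. Passing from ``$\sigma\subseteq D(\mathcal{A})$'' to the existential closure $\exists\bar a\,\tau(\bar a)$ of the quantifier-free content of $\sigma$ is sound in only one direction: a tuple realizing $\tau$ at \emph{other} positions of $\omega$ gives no control over the queries that $\varphi^{D(\mathcal{A})}_e$ actually makes, so $\mathcal{A}\models\exists\bar a\,\tau(\bar a)$ does not imply $\Phi(\mathcal{A})\models\xi_i(\bar b,\bar c)$. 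This is not bookkeeping; it is the theorem. The proof in~\cite{KMV07} overcomes it with a forcing argument over enumerations (generic copies) of the input structure: one shows that satisfaction of $\psi$ in the image of a sufficiently generic copy is forced by finite conditions described up to quantifier-free type, and then uses that $\psi$ is a sentence and that $\Phi$ respects isomorphism to transfer the conclusion back to the given copy. Any correct proof must contain some such invariance-restoring device, which your outline omits.
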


An analysis of the proof of Theorem~\ref{thm:Pullback} shows that this result admits a full relativization as follows. 

Fix an oracle $X \subseteq \omega$. In a natural way, a \emph{Turing $X$-relativized operator} $\varphi_{e,X}$ can be defined as follows: for a set $Z\subseteq \omega$ and a natural number $k$, let
\[
	\Phi^{Z}_{e,X}(k) := \Phi^{Z\oplus X}_e(k),
\]
where $Z\oplus X$ denotes the usual join of $Z$ and $X$, i.e., $Z\oplus X= \{ 2x : x \in Z\}\cup \{2x+1: x \in X\}$.

We often denote a Turing $X$-relativized operator as $\Phi_{[X]}$.
Informally speaking, one can identify a Turing $X$-relativized operator with a Turing machine which has three tapes: the input tape (on which the machine is allowed to work), the output tape, and the oracle tape, where the oracle tape \emph{always} contains the characteristic function of $X$.

In a straightforward way, one can use the notion of a Turing $X$-re\-la\-ti\-vi\-zed operator to introduce \emph{Turing $X$-computable embeddings}. If there is a Turing $X$-computable embedding from $\mathfrak{K}_0$ into $\mathfrak{K}_1$, then we write $\mathfrak{K}_0 \leq_{tc}^X \mathfrak{K}_1$.

One can obtain the following consequence of Theorem~\ref{thm:Pullback}. 

\begin{corollary}[Relativized Pullback Theorem] \label{corol:Pullback}
	Suppose that $X\subseteq \omega$, and $\mathfrak{K}_0 \leq_{tc}^X \mathfrak{K}_1$ via an operator $\Phi_{[X]}$. Then for any $X$-computable infinitary sentence $\psi$ in the signature of $\mathfrak{K}_1$, one can find, effectively with respect to $X$, an $X$-computable infinitary sentence $\psi^{\star}$ in the signature of $\mathfrak{K}_0$ such that for all $\mathcal{A}\in \mathfrak{K}_0$, we have $\mathcal{A} \models \psi^{\star}$ if and only if $\Phi_{[X]}(\mathcal{A}) \models \psi$. Furthermore, for a non-zero $\alpha <\omega^X_1$, if $\psi$ is a $\Sigma^{c}_{\alpha}(X)$ formula ($\Pi^{c}_{\alpha}(X)$ formula), then so is $\psi^{\star}$.
\end{corollary}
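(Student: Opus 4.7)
The plan is to relativize the proof of Theorem~\ref{thm:Pullback} wholesale to the oracle $X$. The unrelativized argument proceeds by transfinite induction on the complexity $\alpha$ of the infinitary sentence $\psi$: at the quantifier-free base, the pullback $\psi^{\star}$ is read off from the finite use of the Turing operator $\Phi$ needed to decide atomic facts of $\Phi(\mathcal{A})$; at higher levels, the c.e.\ disjunctions (resp.\ conjunctions) defining $\Sigma^{c}_{\alpha}$ (resp.\ $\Pi^{c}_{\alpha}$) formulas are pulled back term-by-term, yielding an effective enumeration of pullbacks. Crucially, the entire process is uniformly computable from an index for $\Phi$ together with a computable index for $\psi$.

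I would carry out exactly the same induction for $\Phi_{[X]}$, replacing ``computable'' by ``$X$-computable'' and ``c.e.'' by ``$X$-c.e.'' at every step. Since $\Phi_{[X]}$ is realized as $\varphi^{D(\mathcal{A})\oplus X}_{e}$, its use on any query is a finite initial segment of $D(\mathcal{A})\oplus X$. For quantifier-free $\psi$ in $L_1$, truth in $\Phi_{[X]}(\mathcal{A})$ is therefore determined by a finite portion of $D(\mathcal{A})$ together with $X$, which yields an $X$-computable pullback $\psi^{\star}$ of the appropriate low complexity. At the inductive step, a $\Sigma^{c}_{\alpha}(X)$ sentence $\psi$ is an $X$-c.e.\ disjunction of $\exists \bar{y}\,\xi_{i}(\bar{y})$ with each $\xi_{i}$ a $\Pi^{c}_{\beta_{i}}(X)$-formula for some $\beta_{i}<\alpha$. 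Applying the inductive hypothesis to each $\xi_{i}$ and pulling back the existential quantifier (see below) produces a $\Sigma^{c}_{\alpha}(X)$ sentence $\psi^{\star}$, and each step of the construction is $X$-effective in the given data. The $\Pi^{c}_{\alpha}(X)$ case is symmetric. The bound $\alpha<\omega_{1}^{X}$ (in place of $\omega_{1}^{CK}$) is exactly what is needed to keep $\Sigma^{c}_{\alpha}(X)$-formulas well-defined throughout the transfinite induction.

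The main point deserving care, as in the original argument, is the pullback of the existential quantifier: a witness for $\exists \bar{y}\,\xi$ in $\Phi_{[X]}(\mathcal{A})$ lives in the domain of $\Phi_{[X]}(\mathcal{A})$, not in $\mathcal{A}$. One handles this by observing that every finite substructure of $\Phi_{[X]}(\mathcal{A})$ is uniformly produced by $\Phi_{[X]}$ from some finite substructure of $\mathcal{A}$; conversely, each candidate finite piece on the input side, together with the initial segment of $X$ it queries, determines a finite piece of $\Phi_{[X]}(\mathcal{A})$. The existential quantifier in $\psi^{\star}$ thus ranges over tuples in the domain of $\mathcal{A}$, interpreted through the $X$-computable correspondence implicit in $\Phi_{[X]}$, and the resulting disjunction is $X$-c.e.\ by relativization of the original uniformity. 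With this piece in place, every clause of Theorem~\ref{thm:Pullback} transfers verbatim, and we obtain both the biconditional $\mathcal{A}\models\psi^{\star}\Leftrightarrow\Phi_{[X]}(\mathcal{A})\models\psi$ and the claim that an $X$-index for $\psi^{\star}$ can be computed $X$-effectively from an $X$-index for $\psi$, at the same syntactic level.
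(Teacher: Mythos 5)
Your proposal is correct and matches the paper's approach exactly: the paper offers no proof beyond the one-line remark that ``an analysis of the proof of Theorem~\ref{thm:Pullback} shows that this result admits a full relativization,'' and your sketch carries out precisely that relativization (replacing computable by $X$-computable, c.e.\ by $X$-c.e., and $\omega_1^{CK}$ by $\omega_1^X$ throughout the induction). The only point where you are slightly looser than the Knight--Miller--Vanden Boom argument is the treatment of the existential quantifier --- in their proof the witness quantifier over the output domain is absorbed into the c.e.\ disjunction over names of output elements together with finite input configurations forcing the relevant atomic facts, rather than literally ranging over tuples of $\mathcal{A}$ --- but this does not affect the correctness of the relativization.
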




\subsection{Connecting $\Inf\Ex_{\cong}$-learnability and $tc$-embeddings} \label{subsect:connect}

Let $L$ be a finite signature, and $\mathfrak{K}_0$ be a class of $L$-structures. Let $\nu$ be an effective enumeration of the class $\mathfrak{K}_0$.

Suppose that $\mathfrak{K} = \{ \mathcal{B}_i \,\colon i\in\omega\}$ is a family of $L$-structures with the following properties:
\begin{itemize}
	\item[(a)] $\mathfrak{K}$ is a subclass of $\mathfrak{K}_0$. All $\mathcal{B}_i$ are infinite and pairwise non-isomorphic.
	
	\item[(b)] There is a learner $M$ which $\Inf\Ex_{\cong}[\nu]$-learns the class $\mathfrak{K}$.
\end{itemize}

We choose the oracle $X$ as follows:
\begin{equation} \label{equ:oracle_choosing}
	X := M \oplus \{ \langle i,k\rangle\,\colon i\in\omega,\ k\in Ind(\mathcal{B}_i;\nu) \} \oplus \{ j \,\colon \exists i (  j\in Ind(\mathcal{B}_i;\nu))\}.
\end{equation}

Consider a signature 
\[
	L_{st} := \{ \leq\} \cup \{ P_i\,\colon i\in\omega\},
\] 
where every $P_i$ is a unary relation. For $i\in\omega$, we define an $L_{st}$-structure $\mathcal{S}_i$ as follows: All $P_j$ are disjoint. For $j\neq k$, if $x\in P_j$ and $y\in P_k$, then $x$ and $y$ are incomparable under $\leq$. Every $P_j$, $j\neq i$, contains a $\leq$-structure isomorphic to the order type $\eta$ of the rationals. The relation $P_i$ contains a copy of $1+\eta$.

Let $\mathfrak{K}_{st}$ denote the class $\{ \mathcal{S}_i\,\colon i\in\omega\}$.

\begin{prop}\label{prop:exists-tc-emb}
	There is a Turing $X$-computable embedding $\Phi_{[X]}$ from $\mathfrak{K}$ into $\mathfrak{K}_{st}$ such that for any $i\in\omega$, we have $\Phi_{[X]}(\mathcal{B}_i) \cong \mathcal{S}_i$.
\end{prop}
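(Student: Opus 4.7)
The plan is to build $\Phi_{[X]}$ by a stagewise Turing $X$-computable construction that, given oracle access to $D(\mathcal{A})$ for $\mathcal{A}\in\mathfrak{K}$, uses the learner $M$ to eventually identify the unique $i$ with $\mathcal{A}\cong\mathcal{B}_i$ and, in parallel, builds a column structure isomorphic to $\mathcal{S}_i$. The three summands of $X$ are used as follows: the copy of $M$ lets us simulate the learner on the canonical informant $I_{\mathcal{A}}$ uniformly computed from $D(\mathcal{A})$; the third summand tells us whether a current conjecture $c_s:=M(I_{\mathcal{A}}[s])$ is valid, i.e.\ belongs to $\bigcup_{i} Ind(\mathcal{B}_i;\nu)$; and the second summand lets us translate any valid $c_s$ into the unique index $j$ with $c_s\in Ind(\mathcal{B}_j;\nu)$. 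Thanks to Theorem~\ref{theo:weak-lock-002} we may assume $M$ is informant-locking on $\mathfrak{K}$, so there is a stage $s_0$ after which $c_s$ stabilizes on some $e_\ast\in Ind(\mathcal{B}_i;\nu)$. Define $i_s$ to be the translated index when $c_s$ is valid, and a fresh placeholder $\star$ (satisfying $\star\neq j$ for every $j\in\omega$) otherwise.

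Reserve the natural number $\langle j,n\rangle$ as the $n$-th potential element of column $P_j$, and declare every cross-column $\le$-comparison to be false. At stage $s+1$, loop over all $j\le s$ and perform three additions inside $P_j$: \emph{(i)} add a fresh reserved element strictly above all elements of $P_j$ currently in the domain; \emph{(ii)} for every pair $u<v$ of elements currently in $P_j$ with no element of $P_j$ strictly between them, add a fresh element in the open interval $(u,v)$; and \emph{(iii)} if $i_s\neq j$, add a fresh reserved element strictly below all current elements of $P_j$. Each addition activates a previously unused reserved element and commits the corresponding positive and negative atomic facts, producing a monotone enumeration of the atomic diagram of a well-defined $L_{st}$-structure $\Phi_{[X]}(\mathcal{A})$.

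Fix the unique $i$ with $\mathcal{A}\cong\mathcal{B}_i$. For all $s\ge s_0$ we have $i_s=i$, so step \emph{(iii)} is never triggered in column $P_i$ after $s_0$; the minimum of $P_i$ at stage $s_0$ is therefore preserved forever, while \emph{(i)} and \emph{(ii)} build a countable dense linear order without greatest element above it, so $P_i\cong 1+\eta$. For each $j\neq i$, we have $i_s=i\neq j$ for all $s\ge\max(s_0,j)$, so step \emph{(iii)} keeps inserting strictly smaller elements indefinitely; together with \emph{(i)} and \emph{(ii)} this produces a countable dense linear order without endpoints, so $P_j\cong\eta$. Hence $\Phi_{[X]}(\mathcal{A})\cong\mathcal{S}_i$, and both clauses of Definition~\ref{def:tc-embedding} follow because $\mathcal{S}_i\cong\mathcal{S}_{i'}\iff i=i'\iff\mathcal{B}_i\cong\mathcal{B}_{i'}$. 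The main subtlety is the bookkeeping for \emph{(ii)}: new elements added every stage by \emph{(i)} and \emph{(iii)} create fresh consecutive pairs that must be split later, but a standard fair enumeration of pairs ensures every pair is eventually separated, so density in each column is secured in the limit.
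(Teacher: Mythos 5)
Your proposal is correct and follows essentially the same route as the paper's proof: simulate the learner $M$ on a canonical informant extracted from $D(\mathcal{A})$, use the three summands of $X$ to decode each conjecture into the corresponding index $j$, and arrange the columns so that the column of the limiting guess acquires a permanent least element (yielding $1+\eta$) while all others are pushed downward forever (yielding $\eta$). The only differences are cosmetic --- the paper realizes the columns as shrinking intervals $(q_s;\infty)$ in a fixed copy of $\eta$ rather than by your explicit add-top/densify/add-bottom routine, it first normalizes the input to have domain $\omega$ via an auxiliary operator $\Psi$ (a step you should add, since informants are only defined for domain-$\omega$ structures), and your appeal to Theorem~\ref{theo:weak-lock-002} is unnecessary because convergence on the single canonical informant already follows from Definition~\ref{definition:main}.
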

\begin{proof}
	Let $\mathcal{C}$ be a structure such that $\mathcal{C}$ is isomorphic to some $\mathcal{B}_i$, and $dom(\mathcal{C}) \subseteq \omega$. 
	
	It is not hard to show that there is a Turing operator $\Psi$ with the following property: If $\mathcal{E}$ is a countably infinite $L$-structure with $dom(\mathcal{E})\subseteq\omega$, then $\Psi^{D(\mathcal{E})}$ is the atomic diagram of a structure $\mathcal{E}_1$ such that $dom(\mathcal{E}_1)=\omega$ and $\mathcal{E}_1$ is $D(\mathcal{E})$-computably isomorphic to $\mathcal{E}$.
	
	The existence of the operator $\Psi$ implies that w.l.o.g., we may assume that the domain of our $\mathcal{C}$ is equal to $\omega$. For simplicity, we assume that $L = \{ Q_0, Q_1, \dots, Q_l\}$, where each $Q_i$ has arity $i+1$. For $i\leq l$, fix a computable bijection $\gamma_i\colon \omega\to \omega^{i+1}$.
	
	We describe the construction of the $L_{st}$-structure $\Phi_{[X]}(\mathcal{C})$. First, define an $L$-informant $I^{\mathcal{C}}$ as follows. For $i\leq l$ and $m\in\omega$, set:
	\[
		I^{\mathcal{C}}_i(m) = 
		\begin{cases}
			(\gamma_i(m), 1), & \text{if } \mathcal{C}\models Q_i(\gamma_i(m)),\\
			(\gamma_i(m), 0), & \text{if } \mathcal{C}\models \neg Q_i(\gamma_i(m)).
		\end{cases}
	\]
	
	Fix a computable copy $\mathcal{M}$ of the ordering $\eta$, and choose a computable descending sequence $q_0>_{\mathcal{M}} q_1 >_{\mathcal{M}} q_2 >_{\mathcal{M}} \dots$.
	
	The construction of the structure $\mathcal{E} = \Phi_{[X]}(\mathcal{C})$ proceeds in stages.
	
	\emph{Stage 0.} Put inside every $P_j^{\mathcal{E}}$, $j\in\omega$, a computable copy of the interval $(q_0;\infty)_{\mathcal{M}}$.
	
	\emph{Stage $s+1$.} Recall that the learner $M$ $\Inf\Ex_{\cong}[\nu]$-learns the class $\mathfrak{K}$. Compute the value $t := M(I^{\mathcal{C}}[s+1])$. Using the oracle $X$, one can find whether the number $t$ is a $\nu$-index for some $\mathcal{B}_j$, $j\in\omega$.
	
	If $t$ is not a $\nu$-index for any $\mathcal{B}_j$, then extend every $P^{\mathcal{E}}_k$, $k\in\omega$, to a copy of $(q_{s+1};\infty)_{\mathcal{M}}$.
	
	Otherwise, assume that $t$ is an index for $\mathcal{B}_j$. If $P^{\mathcal{E}}_j[s]$ has the least element, then do not change $P^{\mathcal{E}}_j[s]$. If $P^{\mathcal{E}}_j[s]$ has no least element, then define $P^{\mathcal{E}}_j[s+1]$ as a copy of the interval $[q_{s+1};\infty)_{\mathcal{M}}$. Note that this interval is isomorphic to $1+\eta$. In any case, extend every $P^{\mathcal{E}}_k[s]$, $k\neq j$, to a copy of the open interval $(q_{s+1};\infty)_{\mathcal{M}}$.
	
	This concludes the description of the construction. It is not hard to show that the construction gives a Turing $X$-computable operator $\Phi_{[X]}$. Moreover, if the input structure $\mathcal{C}$ is isomorphic to $\mathcal{B}_i$, then there is a stage $s_0$ such that for any $s\geq s_0$, we have $M(I^{\mathcal{C}}[s])=M(I^{\mathcal{C}}[s_0])$ is a $\nu$-index of the structure $\mathcal{B}_i$. Hence, $P_i^{\Phi_{[X]}(\mathcal{C})}$ contains a copy of $1+\eta$, and for every $j\neq i$, $P_j^{\Phi_{[X]}(\mathcal{C})}$ copies $\eta$. Thus, $\Phi_{[X]}(\mathcal{C})$ is isomorphic to $\mathcal{S}_i$. 
	
	Proposition~\ref{prop:exists-tc-emb} is proved.
\end{proof}


\subsection{Proof of Theorem~\ref{thm:Sigma_2-theories}} 
\label{subsect:proof-finish}

\begin{proof}
	$(1) \Rightarrow (2)$: 
	Choose an oracle $X$ according to Equation~(\ref{equ:oracle_choosing}). By Proposition~\ref{prop:exists-tc-emb}, there is a Turing $X$-computable embedding 
	\[
		\Phi_{[X]} \colon\mathfrak{K} \leq^X_{tc} \mathfrak{K}_{st}
	\] 
	such that $\Phi_{[X]}(\mathcal{B}_i)$ is a copy of $\mathcal{S}_i$. 
	
	Consider an $\exists\forall$-sentence in the signature $L_{st}$
	\[
		\xi_i := \exists x \forall y [ P_i(y) \rightarrow (x\leq y)].
	\]
	Note that $\mathcal{S}_j\models \xi_i$ if and only if $i=j$. By Corollary~\ref{corol:Pullback}, we obtain a sequence of $X$-computable infinitary $\Sigma_2$ sentences $(\xi^{\star}_i)_{i\in\omega}$. Clearly, this sequence has the desired properties.
	
	\smallskip
	
	$(2) \Rightarrow (1)$: W.l.o.g., for all $i$, assume that 
\[
\psi_i:=\exists x_1,\ldots,x_{n_i} \underset{j\in J_i}{\bigwedge\skipmm{4.5}\bigwedge} \forall  y_1,\ldots,y_{m_{i,j}} 
\phi_{i,j}(x_1,\dots,x_{n_i},y_1,\dots,y_{m_{i,j}}),
\]
where every $\phi_{i,j}$ is a quantifier-free formula.

Let $\C$ be a finite structure, and $i\in\omega$. We say that 
the formula $\psi_i$ is $\C$-\emph{compatible} via a tuple $\bar a\in\omega^{n_i}$ if within $\dom(\C)$ there is no pair $(j,\bar b)$, with $j \in J_i$ and $\bar b\in\omega^{m_{i,j}}$, such that $\C\models \neg\phi_{i,j}(\bar a,\bar  b)$. 

We fix a sequence $(e_i)_{i\in\omega}$ such that for every $i$, the structure $\nu(e_i)$ is a copy of $\mathcal{B}_i$.

A learner $M$ for the class $\mathfrak{K}$ can be arranged as follows:
Suppose that $M$ reads a string $\sigma$, which is an initial part of some $L$-informant. Then we search for the least pair $\langle i,\bar a\rangle$ such that the formula $\psi_i$ is $\mathcal{A}_{\sigma}$-compatible via the tuple $\bar a$. If the pair $\langle i,\bar a \rangle$ is found, then set $M(\sigma):= e_i$. Otherwise, define $M(\sigma):=0$.


\smallskip

\textbf{Verification.} 
Fix $j\in\omega$. Let $I$ be an informant for the structure $\mathcal{B}_j$. Recall that $\mathcal{B}_j = \bigcup_{s\in\omega} \mathcal{A}_{I[s]}$ and $\mathcal{A}_{I[s]}\subseteq \mathcal{A}_{I[s+1]}$.

We note the following simple fact: Suppose that a formula $\psi_i$ is not $\mathcal{A}_{I[t_0]}$-compatible via a tuple $\bar d$. Then for any $t\geq t_0$, $\psi_i$ also cannot be $\mathcal{A}_{I[t]}$-compatible via $\bar d$.

Recall that $\mathcal{B}_j\models \psi_i$ if and only if $i=j$. Hence, there exists the least tuple $\bar a\in \omega^{n_j}$ with the following property: there is a stage $s_0$ such that for every $s\geq s_0$, the formula $\psi_j$ is $\mathcal{A}_{I[s]}$-compatible via the tuple $\bar a$. Furthermore, it is not difficult to see that
\[
	\mathcal{B}_j\models \neg\psi_i \ \Leftrightarrow\ 
	(\forall \bar c \in\omega^{n_i})(\exists s_1)(\psi_i \text{ is not } \mathcal{A}_{I[s_1]} \text{-compatible via } \bar c).
\]
Hence, for every number $\langle k, \bar c\rangle < \langle j,\bar a\rangle$, there is a stage $t_1$ such that for any $t\geq t_1$, the formula $\psi_k$ is not $\mathcal{A}_{I[t]}$-compatible via $\bar c$. This means that there is $t^{\star}$, such that the current conjecture $M(I[t^{\star}])$ is correct (i.e., $\nu(M(I[t^{\star}]))$ is a copy of $\mathcal{B}_j$), and our learner $M$ does not change its mind after the stage $t^{\star}$.


Therefore, the class $\mathfrak{K}$ is $\Inf\Ex_{\cong}[\nu]$-learnable by the learner $M$. This concludes the proof of Theorem~\ref{thm:Sigma_2-theories}.
\end{proof}

\subsection{Further discussion} \label{subsect:further-disc}
	We note that it would be interesting to attack the following question: If a class $\mathfrak{K} = \{ \mathcal{B}_i\,\colon i\in\omega\}$ is $\Inf\Ex_{\cong}[\nu]$-learnable, could one construct \emph{explicitly} some sequence $\{\psi_i\,\colon i\in\omega\}$ of $\Sigma^{\mathrm{inf}}_2$-sentences distinguishing the structures $\mathcal{B}_i$?  
	
	To our best knowledge, it seems that our proof of Theorem~\ref{thm:Sigma_2-theories} does not provide such a construction. Furthermore, even for the case when a class $\mathfrak{K}$ is learnable by a computable learner, it is quite hard to give a nice description of the properties of $\mathcal{B}_i$ expressed by our formulas $\psi_i$.
	
	Indeed, suppose that $\{ \mathcal{B}_i\,\colon i\in\omega\}$ is learnable by a computable learner. Then in general, the oracle $X$ from Eq.~(\ref{equ:oracle_choosing}) can be noncomputable. A simple example of a noncomputable $X$ is provided by the family containing two isomorphism types of linear orders: $\mathcal{B}_0 = \omega$ and $\mathcal{B}_1 = \omega^{\ast}$. Suppose that we consider the standard effective enumeration $\nu$, which enumerates all computable structures in the signature $\{ \leq\}$. Then it is not hard to show that both index sets $Ind(\mathcal{B}_0;\nu)$ and $Ind(\mathcal{B}_1;\nu)$ are $\Pi^0_3$-complete. Therefore, the corresponding oracle $X$ is not even $\mathbf{0}^{(2)}$-c.e., let alone computable.
	
	For the sake of simplicity, assume that for a particular class $\mathfrak{K}$, the obtained oracle $X$ is computable. Even in this case, there are further complications. We illustrate these problems by an informal ``toy'' example. The example is, in a sense, a simplified version of the proof $(1) \Rightarrow (2)$ of Theorem~\ref{thm:Sigma_2-theories}.
	
	Consider a class $\mathfrak{K}$ consisting of two computable undirected graphs: 
	\begin{enumerate}
		\item The graph $G_1$ contains infinitely many isolated nodes and one cycle of size $2k+3$ for each $k\in\omega$.
		
		\item The graph $G_2$ has infinitely many isolated nodes and one cycle of size $2k+4$ for each $k$.
	\end{enumerate}
	We define a computable learner $M$, which acts according to the following rules:
	\begin{itemize}
		\item Given an input graph $H$, $M$ searches for the least natural numbers $k_0<l_0$ such that $H$ contains the edge $(k_0,l_0)$.
		
		\item When $k_0$ and $l_0$ are found, $M$ searches for a cycle of size $n\in\{ 2k_0+3, 2k_0+4\}$ inside $H$. If $n=2k_0+3$, then $M$ says that $H$ is isomorphic to $G_1$. If $n=2k_0+4$, then $M$ says $H \cong G_2$.
	\end{itemize}
	
	Consider finite undirected graphs $F_1$ and $F_2$ such that $\mathrm{dom}(F_1) = \mathrm{dom}(F_2) = \{ 0,1\}$, the two nodes of $F_1$ are isolated, and $F_2$ contains an edge between $0$ and $1$. By employing the learner $M$, one can proceed similarly to Proposition~\ref{prop:exists-tc-emb} and construct a Turing computable embedding
	\[
		\Phi\colon \{ G_1, G_2\} \leq_{tc} \{ F_1, F_2\}
	\]
	such that $\Phi$ satisfies a stronger condition: for each $i\in\{ 1,2\}$, if $H$ is an isomorphic copy of $G_i$, then $\Phi(H)$ \emph{equals} $F_i$.
	
	Consider two existential sentences in the signature of graphs:
	\[
		\xi_1 = \exists x \exists y[ x\neq y \,\&\, \neg \textnormal{Edge}(x,y)] \text{ and } \xi_2 = \exists x \exists y[ x\neq y \,\&\, \textnormal{Edge}(x,y)].
	\]
	One can apply the proof of the Pullback Theorem for $\Sigma^c_1$-sentences (see Special Case on p.~905 of~\cite{KMV07}). The $tc$-embedding $\Phi$ induces $\Sigma^c_1$-sentences $\xi^{\star}_1$ and $\xi^{\star}_2$ such that
	\begin{equation}\label{equ:graphs-example}
		G_1 \models \xi^{\star}_1 \& \neg \xi^{\star}_2 \text{ and } G_2 \models \neg \xi^{\star}_1 \& \xi^{\star}_2.
	\end{equation}
	
	An analysis of the proof of~\cite{KMV07} shows that for \emph{this particular} $tc$-embedding $\Phi$, we have:
	\begin{enumerate}
		\item $\xi^{\star}_1$ is an infinite disjunction, which includes formulas
		\[
			\theta_{2k+3} = \exists x_1 \exists x_2 \dots \exists x_{2k+3}[x_i \text{-s form a cycle of size } 2k+3]
		\]
		(and possibly some other $\exists$-formulas).
		
		\item Similarly, $\xi^{\star}_2$ includes a disjunction of formulas
		\[
			\theta_{2k+4} = \exists x_1 \exists x_2 \dots \exists x_{2k+4}[x_i \text{-s form a cycle of size } 2k+4].
		\]
	\end{enumerate}
	
	On the other hand, it is clear that one can replace these formulas $\xi^{\star}_1$ and $\xi^{\star}_2$ with $\xi^{\#}_1 = \theta_3$ and  $\xi^{\#}_2 = \theta_4$, while preserving the property~(\ref{equ:graphs-example}).
	
	The described example shows that in general, the concrete formulas $\xi^{\star}_i$, built in Theorem~\ref{thm:Sigma_2-theories}, depend on the choice of $tc$-embedding $\Phi$. Thus, it is hard to say how the formulas are related to familiar algebraic properties of the original structures $\mathcal{B}_i$.
	
	In conclusion, we note that Theorem~\ref{thm:Sigma_2-theories} \emph{does not use} the full strength of the Pullback Theorem: it is sufficient to employ Pullback only for finitary formulas of the form $\xi = \exists \bar x \forall \bar y \theta(\bar x, \bar y)$, where $\theta$ is quantifier-free.
	Nevertheless, it seems that the proof of such restricted version of the Pullback Theorem still requires developing essentially the same forcing machinery as for the general form.


\section{Applications of the main result}\label{sec:applications}

The first application gives an upper bound for the Turing complexity of learners. A straightforward analysis of the proof of Theorem~\ref{thm:Sigma_2-theories} provides us with the following:

\begin{corollary}\label{coroll:complexity}
	Let $X\subseteq \omega$ be an oracle. Let $\mathfrak{K}_0$ be a class of countably infinite $L$-structures, and $\nu$ be an effective enumeration of $\mathfrak{K}_0$. Assume that either $I = \omega$, or $I$ is a finite initial segment of $\omega$. Consider a subclass $\mathfrak{K} = \{ \mathcal{B}_i \,\colon i\in I\}$ inside $\mathfrak{K}_0$. Assume that
	\begin{itemize} 
		\item[(i)] There is uniformly $X$-com\-pu\-table sequence of $\Sigma^c_2(X)$ sentences $(\psi_i)_{i\in I}$ such that:
		\[
			\mathcal{B}_j\models \psi_i \ \Leftrightarrow\ i=j.
		\]
		
		\item[(ii)] There is an $X$-computable sequence $(e_i)_{i\in I}$ such that $\nu(e_i)\cong \mathcal{B}_i$ for all $i$. Note that if  the set $I$ is finite, then one can always choose this sequence in a computable way.
	\end{itemize}
	Then the class $\mathfrak{K}$ is $\Inf\Ex_{\cong}[\nu]$-learnable via an $X$-computable learner.
\end{corollary}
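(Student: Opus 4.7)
The plan is to observe that the learner $M$ built in the proof of $(2)\Rightarrow(1)$ of Theorem~\ref{thm:Sigma_2-theories} is already $X$-computable once hypotheses (i) and (ii) are in force; no genuinely new construction is required, only a careful audit of where oracles are consulted.

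First, I would revisit the recipe used in that proof. Writing each formula in the prenex form $\psi_i = \exists\bar x\bigwedge_{j\in J_i}\forall\bar y\,\phi_{i,j}(\bar x,\bar y)$ with $\phi_{i,j}$ quantifier-free, the learner on input $\sigma$ computes $\mathcal{A}_\sigma$, searches for the least pair $\langle i,\bar a\rangle$ (with $i\in I$) such that $\psi_i$ is $\mathcal{A}_\sigma$-compatible via $\bar a$, and outputs $e_i$. Hence its computational burden is two-fold: approximating the compatibility predicate, which requires uniform access to the index sets $J_i$ and the matrices $\phi_{i,j}$; and evaluating the map $i\mapsto e_i$.

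Second, I would unwind the two hypotheses to supply exactly these resources. By~(i), the sequence $(\psi_i)_{i\in I}$ is uniformly $X$-computable as $\Sigma^c_2(X)$ formulas, so the family $(J_i)$ is uniformly $X$-c.e.\ and the matrices $(\phi_{i,j})$ are uniformly $X$-computable. By~(ii), $i\mapsto e_i$ is itself $X$-computable (and when $I$ is finite this is trivially so). I would then replace the classical compatibility check by an $X$-computable stage approximation: at stage $s$ with $|\sigma|=s$, run the $X$-enumeration of $J_i$ for $s$ steps for each $i\in I\cap[0,s]$, and mark a pair $\langle i,\bar a\rangle\le s$ as \emph{bad at stage $s$} if some already enumerated $j\in J_i$ and some $\bar b$ bounded by $\dom(\mathcal{A}_\sigma)$ witness $\mathcal{A}_\sigma\models\neg\phi_{i,j}(\bar a,\bar b)$. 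Define $M^X(\sigma):=e_{i^\star}$ for the least currently non-bad pair $\langle i^\star,\bar a^\star\rangle$, and $0$ otherwise. Every clause is $X$-computable, so $M^X$ is $X$-computable.

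Finally, I would re-run the verification from Theorem~\ref{thm:Sigma_2-theories} to see that it still delivers convergence. Fix an informant for $\mathcal{B}_j$ and let $\bar a_0$ be the least witness to $\mathcal{B}_j\models\psi_j$. Because $\mathcal{A}_{I[s]}$ embeds as a substructure of $\mathcal{B}_j$ and each $\phi_{j,j'}$ is quantifier-free, the pair $\langle j,\bar a_0\rangle$ is never marked bad; whereas every strictly smaller pair $\langle k,\bar c\rangle$ must eventually be marked bad, since some $j'\in J_k$ and some $\bar b$ witness a violation inside $\mathcal{B}_j$, and both $\bar b$ and the enumeration of $j'$ into $J_k$ appear at some finite stage. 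Monotonicity of the ``bad'' marker then forces the output of $M^X$ to stabilize at $e_j$, whose $\nu$-image is a copy of $\mathcal{B}_j$. The one genuinely delicate point, and the one I would check most carefully, is this synchronization between the $X$-enumeration of $J_k$ and the arrival of atomic data in $\mathcal{A}_{I[s]}$: both events occur on independent clocks, but each is certain to occur, so it suffices to wait for a common stage at which both have taken place.
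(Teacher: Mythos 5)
Your proposal is correct and is essentially the intended argument: the paper offers no separate proof of Corollary~\ref{coroll:complexity}, stating only that it follows from a ``straightforward analysis'' of the $(2)\Rightarrow(1)$ direction of Theorem~\ref{thm:Sigma_2-theories}, and your audit is precisely that analysis carried out. Your stage-wise approximation of the compatibility predicate correctly handles the one point the paper leaves implicit --- namely that the index sets $J_i$ are only $X$-c.e., so the learner must enumerate them alongside the incoming atomic data rather than decide membership outright --- and your verification that the least non-bad pair stabilizes at $\langle j,\bar a_0\rangle$ matches the paper's verification of the underlying learner.
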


The rest of the section discusses applications of Theorem~\ref{thm:Sigma_2-theories} and Corollary~\ref{coroll:complexity} to some familiar classes of algebraic structures.

\subsection{Simple examples of learnable classes}

Here we give two examples of learnable infinite families. 

The first one deals with distributive lattices. We treat lattices as structures in the signature $L_{\mathrm{lat}} := \{ \vee, \wedge\}$.

Selivanov~\cite{Selivanov88} constructed a uniformly computable family $\{ \mathcal{D}_i\,\colon i\in\omega\}$ of finite distributive lattices with the following property: If $i\neq j$, then there is no isomorphic embedding from $\mathcal{D}_i$ into $\mathcal{D}_j$ (see Figure~1).

For $i\in\omega$, we define a countably infinite poset $\mathcal{B}_i$. Informally speaking, $\mathcal{B}_i$ is a direct sum of the lattice $\mathcal{D}_i$ and the linear order $\omega$. More formally, we set:
\begin{itemize}
	\item $dom(\mathcal{B}_i) = \{ \langle x,0\rangle \,\colon x\in \mathcal{D}_i\} \cup \{ \langle y,1\rangle\,\colon y\in\omega\}$.
	
	\item We always assume that $\langle x,0\rangle \leq \langle y,1\rangle$. The ordering of the elements $\langle x,0\rangle$ is induced by $\mathcal{D}_i$. We have $\langle y,1\rangle \leq \langle z,1\rangle$ if and only if $y \leq_{\omega} z$.
\end{itemize}
It is not hard to show that $\mathcal{B}_i$ is a distributive lattice, thus, we will treat $\mathcal{B}_i$ as an $L_{\mathrm{lat}}$-structure.

Let $\mathfrak{K}_{lat}$ denote the class $\{ \mathcal{B}_i\,\colon i\in\omega\}$. It is clear that one can build a Friedberg effective enumeration $\nu_{lat}$ as follows: just define $\nu_{lat}(i)$ as a natural computable copy of $\mathcal{B}_i$.

\begin{figure} \label{figure:lattices}
	\includegraphics{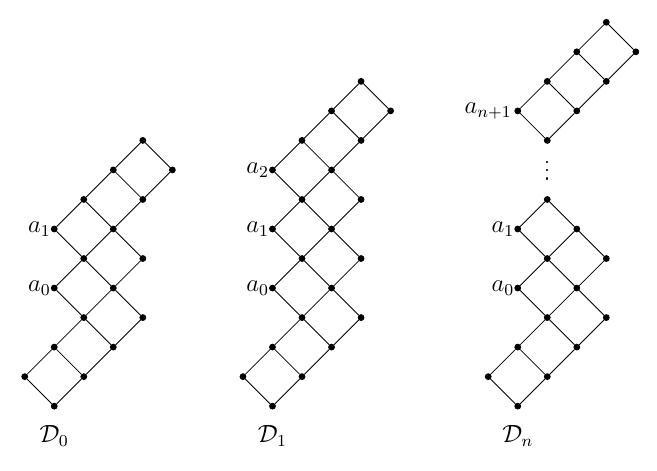}
	\caption{Finite lattices $\mathcal{D}_i$, $i\in\omega$.}
\end{figure}

\begin{prop} \label{prop:lattices}
	The class $\mathfrak{K}_{lat}$ is $\Inf\Ex_{\cong}[\nu_{lat}]$-learnable via a computable learner.
\end{prop}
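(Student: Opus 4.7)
The plan is to apply Corollary~\ref{coroll:complexity} with oracle $X = \emptyset$. It therefore suffices to exhibit, uniformly computably in $i$, a $\Sigma^{c}_{2}$ sentence $\psi_{i}$ in $L_{\mathrm{lat}}$ such that $\mathcal{B}_{j} \models \psi_{i}$ if and only if $i = j$, together with a computable sequence of $\nu_{lat}$-indices of the $\mathcal{B}_{i}$'s. The latter is immediate from the definition of $\nu_{lat}$: set $e_{i} := i$.

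For the sentences, let $n_{i} := |\mathcal{D}_{i}|$. Using the strong computability of Selivanov's family, fix a uniformly computable quantifier-free $L_{\mathrm{lat}}$-formula $\phi_{i}(x_{1},\dots,x_{n_{i}})$ capturing the complete atomic diagram of $\mathcal{D}_{i}$ on $x_{1},\dots,x_{n_{i}}$ (including pairwise distinctness of the variables); abbreviate $u \leq v$ for $u \wedge v = u$. Define
\[
\psi_{i} \;:=\; \exists x_{1} \cdots \exists x_{n_{i}}\, \Bigl( \phi_{i}(x_{1},\dots,x_{n_{i}}) \wedge \forall z\, \Bigl( \bigvee_{k=1}^{n_{i}} (z \leq x_{k}) \to \bigvee_{\ell=1}^{n_{i}} (z = x_{\ell}) \Bigr) \Bigr).
\]
This is a $\Sigma^{c}_{2}$ sentence, uniformly in $i$; informally, $\psi_{i}$ asserts the existence of a downward-closed finite sublattice isomorphic to $\mathcal{D}_{i}$.

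That $\mathcal{B}_{i} \models \psi_{i}$ is immediate: the elements of the $\mathcal{D}_{i}$-part of $\mathcal{B}_{i}$ witness $\psi_{i}$, since they form a copy of $\mathcal{D}_{i}$ while the $\omega$-part lies strictly above them. The main obstacle is the converse. Assume $i \neq j$ and $\mathcal{B}_{j} \models \psi_{i}$ via witnesses $a_{1},\dots,a_{n_{i}}$, and set $S := \{a_{1},\dots,a_{n_{i}}\}$. Since $\phi_{i}$ records the full atomic diagram of $\mathcal{D}_{i}$, the set $S$ is a sublattice of $\mathcal{B}_{j}$ isomorphic to $\mathcal{D}_{i}$, and the universal conjunct forces $S$ to be downward-closed. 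Inspecting $\mathcal{B}_{j} = \mathcal{D}_{j} \oplus \omega$, the finite nonempty downward-closed subsets are either (a) downsets of $\mathcal{D}_{j}$, or (b) sets of the form $\mathcal{D}_{j} \cup \{\langle 0, 1\rangle, \dots, \langle m-1, 1\rangle\}$ for some $m \geq 1$, whose induced sublattice is isomorphic to $\mathcal{D}_{j} \oplus m$. Case (a) yields a lattice embedding $\mathcal{D}_{i} \hookrightarrow \mathcal{D}_{j}$; case (b) yields $\mathcal{D}_{i} \cong \mathcal{D}_{j} \oplus m$, which in turn exhibits $\mathcal{D}_{j}$ as a sublattice of $\mathcal{D}_{i}$. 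Both conclusions contradict Selivanov's non-embeddability property for $i \neq j$. Hence $\mathcal{B}_{j} \models \psi_{i}$ forces $i = j$, and Corollary~\ref{coroll:complexity} delivers the desired computable learner for $\mathfrak{K}_{lat}$.
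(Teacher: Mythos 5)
Your proof is correct and follows the same overall strategy as the paper: apply Corollary~\ref{coroll:complexity} with a computable oracle, taking $e_i := i$ (legitimate since $\nu_{lat}$ is Friedberg) and building uniformly computable distinguishing sentences out of Selivanov's lattices. The difference is in the sentences themselves. The paper uses the purely existential sentence ``$\mathcal{D}_i$ embeds into $\mathcal{S}$'' and rests on the claim that $\mathcal{D}_i$ embeds into $\mathcal{B}_j$ iff $i=j$; verifying the ``only if'' direction there requires ruling out embeddings of $\mathcal{D}_i$ into $\mathcal{D}_j+\omega$ that send part of $\mathcal{D}_i$ into the $\omega$-tail, a point the paper does not spell out. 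Your $\exists\forall$-sentence adds a downward-closedness clause, which confines the witness set to either a downset of $\mathcal{D}_j$ or all of $\mathcal{D}_j$ plus a finite top chain, and your two-case analysis then reduces cleanly to Selivanov's mutual non-embeddability of the finite lattices in both directions. You pay with one extra quantifier block (still comfortably $\Sigma^c_2$, so nothing is lost for Corollary~\ref{coroll:complexity}), and in exchange you get a verification that is self-contained modulo Selivanov's theorem, arguably tighter than the paper's.
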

\begin{proof}
	For $i\in\omega$, one can easily define a first-order $\exists$-sentence $\psi_i$ which fully describes the finite lattice $\mathcal{D}_i$. We have that: for a structure $\mathcal{S}$, $\mathcal{S}\models \psi_i$ iff the finite lattice $\mathcal{D}_i$ can be isomorphically embedded into $\mathcal{S}$.
	
	Note the following properties of the considered objects:
	\begin{itemize}
		\item $\mathcal{D}_i$ embeds into $\mathcal{B}_j$ if and only if $i=j$.
		
		\item The sequence $\{ \psi_i\}_{i\in\omega}$ is uniformly computable (this follows from the fact that the family  $\{ \mathcal{D}_i\,\colon i\in\omega\}$ is uniformly computable).
		
		\item For every $i$, $\nu_{lat}(i) \cong \mathcal{B}_i$.
	\end{itemize}
	Therefore, one can apply Corollary~\ref{coroll:complexity} with a computable oracle $X$. 
	Proposition~\ref{prop:lattices} is proved.
\end{proof}

Recall that $\mathbb{K}_{L_{\mathrm{lat}}}$ is the class of all countably infinite $L_{\mathrm{lat}}$-structures.

\begin{corollary}
	Suppose that $\nu$ is an arbitrary effective enumeration of the class $\mathbb{K}_{L_{\mathrm{lat}}}$. 
	Then the following holds:
	\begin{itemize}
		\item[(a)] The class $\mathfrak{K}_{lat}$ is $\Inf\Ex_{\cong}[\nu]$-learnable. Note that here the complexity of the learner depends only on the complexity of the sequence $(e_i)_{i\in\omega}$ from Corollary~\ref{coroll:complexity}.
		
	 	\item[(b)] $\mathfrak{K}_{lat}$ is $\Inf\Ex_{\cong}[\nu\oplus\nu_{lat}]$-learnable by a computable learner.
	\end{itemize}	
\end{corollary}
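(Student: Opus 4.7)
The plan is to apply Corollary~\ref{coroll:complexity} directly, using the uniformly computable sequence of distinguishing $\exists$-sentences $(\psi_i)_{i\in\omega}$ built in the proof of Proposition~\ref{prop:lattices}. Each $\psi_i$ is a first-order $\exists$-sentence in $L_{\mathrm{lat}}$, hence a $\Sigma^c_1$-sentence (and in particular a $\Sigma^c_2(X)$-sentence) in any oracle $X$. Since $\mathcal{D}_i$ embeds into $\mathcal{B}_j$ iff $i = j$, the sequence $(\psi_i)$ satisfies condition (i) of Corollary~\ref{coroll:complexity} with a computable oracle. Thus, the only quantity to control in each part of the statement is the complexity of the index sequence $(e_i)_{i\in\omega}$ required in condition (ii).

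For part (a), since $\nu$ is an effective enumeration of the whole class $\mathbb{K}_{L_{\mathrm{lat}}}$ of countably infinite $L_{\mathrm{lat}}$-structures, each $\mathcal{B}_i$ necessarily has a $\nu$-index, so the set $\{\langle i, e\rangle : \nu(e) \cong \mathcal{B}_i\}$ is nonempty for every $i$. Therefore a choice function $i \mapsto e_i$ exists; let $X$ be any oracle with respect to which such a sequence can be computed (for instance, an oracle coding a uniform list of indices of computable copies of the $\mathcal{B}_i$ together with sufficient information about $\nu$ to convert them into $\nu$-indices). Corollary~\ref{coroll:complexity} then yields an $X$-computable learner, and since the choice of $X$ depends only on $(e_i)_{i\in\omega}$, the complexity of the learner is controlled exactly as claimed.

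For part (b), I would exploit the computable structure built into $\nu \oplus \nu_{lat}$: by definition of $\oplus$, the odd-indexed entries of $\nu \oplus \nu_{lat}$ reproduce $\nu_{lat}$, so $(\nu \oplus \nu_{lat})(2i+1) = \nu_{lat}(i) \cong \mathcal{B}_i$ by the Friedberg property of $\nu_{lat}$ established just before Proposition~\ref{prop:lattices}. Hence the computable map $i \mapsto 2i+1$ supplies a \emph{computable} sequence of $(\nu \oplus \nu_{lat})$-indices for the $\mathcal{B}_i$. Combining this with the uniformly computable sequence $(\psi_i)$, both hypotheses of Corollary~\ref{coroll:complexity} hold with $X = \emptyset$, so the resulting learner for $\mathfrak{K}_{lat}$ is computable.

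There is no real obstacle here: the argument is a direct application of Corollary~\ref{coroll:complexity}, with the observation that the only nontrivial input—the sequence $(e_i)$—either exists at some fixed oracle complexity (part (a)) or becomes computable once we adjoin $\nu_{lat}$ to $\nu$ (part (b)). The mild subtlety is just to note that in part (a) we do not claim a computable learner, only that the learner's complexity is bounded by the complexity of extracting a $\nu$-indexing of $(\mathcal{B}_i)_{i\in\omega}$.
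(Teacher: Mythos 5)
Your proposal is correct and follows exactly the route the paper intends: the corollary is an immediate application of Corollary~\ref{coroll:complexity} with the uniformly computable $\exists$-sentences from Proposition~\ref{prop:lattices}, where for (a) only the complexity of the index sequence $(e_i)$ is at issue, and for (b) the map $i\mapsto 2i+1$ gives a computable sequence of $(\nu\oplus\nu_{lat})$-indices since $(\nu\oplus\nu_{lat})(2i+1)=\nu_{lat}(i)\cong\mathcal{B}_i$. (Minor remark: the Friedberg property of $\nu_{lat}$ is not actually needed in (b), only that $\nu_{lat}(i)\cong\mathcal{B}_i$.)
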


Our second example deals with abelian $p$-groups. We treat abelian groups as structures in the signature $L_{ag}:=\{+,0\}$. 

For a number $i\in\omega$, define the group
\[
	\mathcal{A}_i := \bigoplus_{j\in \omega} \mathbb{Z}(p^{i+1}).
\]
We set $\mathfrak{K}_{ag} := \{ \mathcal{A}_i\,\colon i\in\omega \}$, and we construct a Friedberg effective enumeration $\nu_{ag}$ as follows: just define $\nu_{ag}(i)$ as a natural computable copy of $\mathcal{A}_i$.

\begin{prop}
	The class $\mathfrak{K}_{ag}$ is $\Inf\Ex_{\cong}[\nu_{ag}]$-learnable by a computable learner.
\end{prop}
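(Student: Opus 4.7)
The plan is to verify the two hypotheses of Corollary~\ref{coroll:complexity} with $X$ computable, so that a computable learner for $\mathfrak{K}_{ag}$ exists automatically. Condition (ii) is essentially free: by construction $\nu_{ag}$ is a Friedberg (in particular decidable) enumeration of $\mathfrak{K}_{ag}$ built in a uniform way, so there is a computable function $i \mapsto e_i$ with $\nu_{ag}(e_i) \cong \mathcal{A}_i$ (indeed, one can just take $e_i = i$).

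For condition (i), the task is to distinguish the groups $\mathcal{A}_i$ by a uniformly computable sequence of $\Sigma_2^c$ sentences in $L_{ag}$. The key algebraic feature is that $\mathcal{A}_i = \bigoplus_{\omega} \mathbb{Z}(p^{i+1})$ is a $p$-group of exponent exactly $p^{i+1}$: every element is killed by $p^{i+1}$, and some element is not killed by $p^i$. Writing $p^k \cdot x$ as an $L_{ag}$-term (an iterated sum), I would define
\[
	\psi_i \;:=\; \exists x \, \forall y \, \bigl[\, p^i \cdot x \neq 0 \;\wedge\; p^{i+1}\cdot y = 0 \,\bigr].
\]
This is a finitary (hence computable infinitary) $\Sigma_2$ sentence, and the map $i \mapsto \psi_i$ is clearly computable, so the sequence $(\psi_i)_{i\in\omega}$ is uniformly computable.

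It remains to check that $\mathcal{A}_j \models \psi_i$ iff $i = j$. The universal clause $\forall y(p^{i+1}\cdot y = 0)$ holds in $\mathcal{A}_j$ iff the exponent of $\mathcal{A}_j$ divides $p^{i+1}$, i.e.\ iff $j \leq i$. The existential clause $\exists x(p^i\cdot x \neq 0)$ holds in $\mathcal{A}_j$ iff $\mathcal{A}_j$ contains an element of order strictly greater than $p^i$, i.e.\ iff $j \geq i$. Combining, $\mathcal{A}_j \models \psi_i$ precisely when $j = i$.

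There is no real obstacle here; the only thing worth being careful about is the syntactic point that ``$p^k \cdot x$'' is shorthand for the term $\underbrace{x+\dots+x}_{p^k}$, so each $\psi_i$ really does live in the language $L_{ag} = \{+,0\}$ and is genuinely $\Sigma_2^c$ (in fact finitary $\Sigma_2$). With (i) and (ii) established, Corollary~\ref{coroll:complexity} applied with $X$ computable produces a computable learner that $\Inf\Ex_{\cong}[\nu_{ag}]$-learns $\mathfrak{K}_{ag}$.
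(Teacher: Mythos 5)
Your proof is correct and takes essentially the same approach as the paper: the paper also distinguishes the groups $\mathcal{A}_i$ by a finitary sentence pinning down the exponent (namely, that $\mathbb{Z}(p^{i+1})$ is a subgroup but $\mathbb{Z}(p^{i+2})$ is not, a conjunction of an $\exists$-sentence and a $\forall$-sentence, hence $\Sigma^c_2$) and then invokes Corollary~\ref{coroll:complexity} with a computable oracle. Your explicit $\exists x\,\forall y\,[\,p^i\cdot x\neq 0 \wedge p^{i+1}\cdot y=0\,]$ is just a prenex variant of the same idea, with the verification spelled out in more detail than the paper's sketch.
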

\begin{proof}
	For $i\in\omega$, one can define a first-order sentence $\psi_i$ which means the following: $\mathcal{S}\models \psi_i$ if and only if $\mathbb{Z}(p^{i+1})$ is a subgroup of $\mathcal{S}$, but $\mathbb{Z}(p^{i+2})$ is not a subgroup of $\mathcal{S}$. Clearly, $\psi_i$ is logically equivalent to a conjunction of an $\forall$-formula (saying that for any element $x$, the condition $p^{i+2} x = 0$ implies $p^{i+1}x=0$) and an $\exists$-formula (saying that there is an element $y$ such that $p^{i+1}y=0$ and $p^{i} y \neq 0$). The rest of the proof is similar to Proposition~\ref{prop:lattices}. Indeed, observe that:
	\begin{itemize}
	\item $\cA_i \models \psi_j$ if and only if $i=j$.
	\item The sequence $\{\psi_i\}_{i\in\omega}$ is uniformly computable.
	\item For every $i$, $\nu_{ag}(i) \cong \mathcal{A}_i$.
	\end{itemize}
	Therefore, one can apply Corollary~\ref{coroll:complexity} to conclude that $\mathfrak{K}_{ag}$ is $\Inf\Ex_{\cong}[\nu_{ag}]$-learnable.
\end{proof}

\begin{corollary}
	Suppose that $\nu$ is an arbitrary effective enumeration of the class $\mathbb{K}_{L_{ag}}$. 
	Then the following holds:
	\begin{itemize}
		\item[(a)] The class $\mathfrak{K}_{ag}$ is $\Inf\Ex_{\cong}[\nu]$-learnable.
		
	 	\item[(b)] $\mathfrak{K}_{ag}$ is $\Inf\Ex_{\cong}[\nu\oplus\nu_{ag}]$-learnable by a computable learner.
	\end{itemize}	
\end{corollary}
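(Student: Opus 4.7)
The plan is to derive both parts as direct applications of Corollary~\ref{coroll:complexity}, following verbatim the strategy used for the analogous corollary after Proposition~\ref{prop:lattices}. The proof sketch of the preceding proposition supplies a uniformly computable sequence of first-order $L_{ag}$-sentences $(\psi_i)_{i\in\omega}$, each equivalent to a conjunction of an $\exists$-sentence and a $\forall$-sentence (hence, up to logical equivalence, a $\Sigma^c_2$ sentence), and satisfying the separation property $\mathcal{A}_j \models \psi_i$ iff $i=j$. Thus condition~(i) of Corollary~\ref{coroll:complexity} is fulfilled with the computable oracle $X = \emptyset$ in both parts, and it only remains to exhibit an appropriate sequence $(e_i)_{i\in\omega}$ of indices as required in condition~(ii).

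For part~(b), by the definition of the join $\nu \oplus \nu_{ag}$ we have $(\nu \oplus \nu_{ag})(2i+1) = \nu_{ag}(i) \cong \mathcal{A}_i$, so setting $e_i := 2i+1$ yields a computable sequence witnessing condition~(ii) of Corollary~\ref{coroll:complexity} with $X = \emptyset$. Applying the corollary produces a computable learner that $\Inf\Ex_{\cong}[\nu \oplus \nu_{ag}]$-learns $\mathfrak{K}_{ag}$, which is precisely part~(b). For part~(a), the arbitrary enumeration $\nu$ of $\mathbb{K}_{L_{ag}}$ only guarantees that each index set $\mathrm{Ind}(\mathcal{A}_i;\nu)$ is non-empty. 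Pick some $e_i \in \mathrm{Ind}(\mathcal{A}_i;\nu)$ for every $i$ and let $X$ be any oracle computing the sequence $(e_i)_{i\in\omega}$ together with $\nu$. Then condition~(ii) holds with this $X$, and Corollary~\ref{coroll:complexity} delivers an $X$-computable learner; in particular, $\mathfrak{K}_{ag}$ is $\Inf\Ex_{\cong}[\nu]$-learnable.

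There is essentially no obstacle beyond what has already been done: the separating sentences from the preceding proposition, combined with the complexity-aware Corollary~\ref{coroll:complexity}, do all the work. The only point worth flagging is that in part~(a) the Turing complexity of the resulting learner is controlled by the oracle needed to locate $\nu$-indices of the structures $\mathcal{A}_i$, which is unavoidable since $\nu$ is arbitrary and may hide these indices behind an arbitrary Turing degree; this matches the analogous observation made after Proposition~\ref{prop:lattices}.
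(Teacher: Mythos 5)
Your proof is correct and is exactly the argument the paper intends: the corollary is stated without proof precisely because it follows from the separating $\exists$-and-$\forall$ sentences of the preceding proposition together with Corollary~\ref{coroll:complexity}, with the index sequence $e_i = 2i+1$ handling part~(b) and an oracle locating $\nu$-indices handling part~(a), just as in the analogous corollary for $\mathfrak{K}_{lat}$. No gaps.
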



\subsection{Boolean algebras}

Proposition~\ref{prop:lattices} provides us with an example of an infinite learnable family of distributive lattices. Here we show that in the realm of Boolean algebras, the situation is dramatically different: informally speaking, one cannot learn even two different isomorphism types of infinite Boolean algebras.

Let $\mathcal{A}$ and $\mathcal{B}$ be structures in the same signature, and $n$ be a non-zero natural number. We write $\mathcal{A} \leq_n \mathcal{B}$ if every infinitary $\Pi_n$ sentence true in $\mathcal{A}$ is also true in $\mathcal{B}$. The relation $\leq_n$ is usually called the \emph{$n$-th back-and-forth relation}.

For a Boolean algebra $\mathcal{C}$, let $\#_{atom}(\mathcal{C})$ denote the cardinality of the set of atoms of $\mathcal{C}$.

\begin{prop}\label{prop:BA}
	Let $\mathfrak{K}$ be some class of infinite Boolean algebras, and let $\nu$ be an effective enumeration of $\mathfrak{K}$. Suppose that $\mathfrak{C}$ is a subclass of $\mathfrak{K}$ such that $\mathfrak{C}$ contains at least two non-isomorphic members. Then the class $\mathfrak{C}$ is not $\Inf\Ex_{\cong}[\nu]$-learnable.
\end{prop}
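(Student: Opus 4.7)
The plan is to combine the characterization of learnability in Theorem~\ref{thm:Sigma_2-theories} with a back-and-forth analysis of infinite Boolean algebras at the $\Sigma^{\infi}_2$ level. Suppose for contradiction that $\mathfrak{C}$ is $\Inf\Ex_{\cong}[\nu]$-learnable. Fix two non-isomorphic members $\mathcal{A}, \mathcal{B} \in \mathfrak{C}$; applying Theorem~\ref{thm:Sigma_2-theories} (together with its finite-family analogue remarked upon just below the statement) to representatives of the isomorphism types of $\mathfrak{C}$, one obtains $\Sigma^{\infi}_2$-sentences $\psi_{\mathcal{A}}$ and $\psi_{\mathcal{B}}$ satisfying $\mathcal{A} \models \psi_{\mathcal{A}}$, $\mathcal{B} \not\models \psi_{\mathcal{A}}$, $\mathcal{B} \models \psi_{\mathcal{B}}$, and $\mathcal{A} \not\models \psi_{\mathcal{B}}$.

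The heart of the argument is the following lemma: for any countably infinite Boolean algebras $\mathcal{A}, \mathcal{B}$ with $\#_{atom}(\mathcal{A}) \geq \#_{atom}(\mathcal{B})$, we have $\mathcal{A} \leq_2 \mathcal{B}$, i.e., $\Sigma^{\infi}_2\text{-}Th(\mathcal{B}) \subseteq \Sigma^{\infi}_2\text{-}Th(\mathcal{A})$. Granting this, since $\#_{atom}$ takes values in the linearly ordered set $\omega \cup \{\omega\}$, one of $\mathcal{A} \leq_2 \mathcal{B}$ or $\mathcal{B} \leq_2 \mathcal{A}$ must hold. Without loss of generality $\mathcal{A} \leq_2 \mathcal{B}$; then the $\Sigma^{\infi}_2$-sentence $\psi_{\mathcal{B}}$, true in $\mathcal{B}$, is also true in $\mathcal{A}$, contradicting the separating property $\mathcal{A} \not\models \psi_{\mathcal{B}}$.

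To establish the lemma, I would use the equivalent formulation that every $\Pi^{\infi}_1$-formula realized in $\mathcal{B}$ (i.e., $\phi(\bar{x})$ with $\mathcal{B}\models\phi(\bar{b})$ for some $\bar{b}$) is realized in $\mathcal{A}$. The crucial observation is that a $\Pi^{\infi}_1$-formula over a Boolean algebra only constrains (i) the quantifier-free type of $\bar{b}$, i.e., the isomorphism type of the finite sub-BA $\langle \bar{b}\rangle$, and (ii) for each $c \in \langle \bar{b}\rangle$, ``upper-bound'' information such as ``$c$ is an atom'' or ``$|[0,c]| \leq 2^n$'' ---  since ``$c$ is not an atom'', ``$[0,c]$ is infinite'', and ``no atom lies below $c$'' all require existentials and are therefore not $\Pi^{\infi}_1$. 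Given $\bar{b} \in \mathcal{B}$ witnessing $\phi$, I would construct $\bar{a} \in \mathcal{A}$ with the same quantifier-free type by choosing actual atoms of $\mathcal{A}$ at the positions where $\bar{b}$ uses atoms of $\mathcal{B}$ (enough exist by the atom-count hypothesis), realizing any prescribed finite-interval constraint as a disjoint finite sum of atoms of $\mathcal{A}$, and taking arbitrary non-atomic elements elsewhere; a routine verification then shows $\mathcal{A}\models\phi(\bar{a})$.

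The main obstacle is making the ``reduction'' in the previous paragraph precise -- essentially a $\Pi_1$-level quantifier elimination for infinitary Boolean-algebra formulas. The most delicate sub-case is $\#_{atom}(\mathcal{A})=\#_{atom}(\mathcal{B})=\omega$ with $\mathcal{A}\not\cong\mathcal{B}$ (for instance, the finite--cofinite BA versus its product with the atomless BA), where the lemma forces $\mathcal{A}\equiv_2\mathcal{B}$; one must check that the finer invariants distinguishing such BAs (superatomicity, the existence of an atomless ideal, etc.) live at higher levels of the infinitary hierarchy and are genuinely invisible at the $\Pi^{\infi}_1$ level.
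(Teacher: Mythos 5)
Your argument is correct and follows essentially the same route as the paper: apply Theorem~\ref{thm:Sigma_2-theories} (in its finite-family form) to get separating $\Sigma^{\infi}_2$ sentences, and then use the fact that for infinite Boolean algebras the relation $\leq_2$ is determined by the number of atoms, so any two members of $\mathfrak{C}$ have comparable $\Sigma^{\infi}_2$-theories, a contradiction. The only difference is that the paper simply cites this back-and-forth fact (Ash--Knight \S 15.3.4 and Alaev) rather than proving it; your sketch of a direct proof is on the right track, but the $\Pi^{\infi}_1$ normal form it hinges on is, as you acknowledge, precisely the nontrivial content of the cited result and would need to be carried out in full for a self-contained argument.
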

\begin{proof}
	Suppose that $\mathcal{A}$ and $\mathcal{B}$ are structures from the class $\mathfrak{C}$ such that $\mathcal{A}\not\cong \mathcal{B}$.

	Using the description of the back-and-forth relations on Boolean algebras \cite[\S\,15.3.4]{AK00}, one can prove the following fact: The condition $\mathcal{A} \leq_2 \mathcal{B}$ holds if and only if $\#_{atom}(\mathcal{A}) \geq \#_{atom}(\mathcal{B})$ (see, e.g., Lemma~11 in \cite{Alaev-04} for more details).
	
	 This fact implies that at least one of the following two conditions must be true: 
	 \[
	 	\Sigma_2^{\infi} \text{-} Th(\mathcal{A}) \subseteq \Sigma_2^{\infi} \text{-} Th(\mathcal{B}) \text{ or }
	 	\Sigma_2^{\infi} \text{-} Th(\mathcal{B}) \subseteq \Sigma_2^{\infi} \text{-} Th(\mathcal{A}).
	 \]  
	 Therefore, by Theorem~\ref{thm:Sigma_2-theories}, we deduce that the class $\mathfrak{C}$ is not $\Inf\Ex_{\cong}[\nu]$-le\-ar\-nable.
\end{proof}


\subsection{Linear orders}

First, we show that linear orders exhibit learning properties, which cannot be witnessed by Boolean algebras.

\begin{prop} \label{prop:lo-fin}
	Let $n\geq 2$ be a natural number. Then there is a class of computable infinite linear orders $\mathfrak{C}$ with the following properties:
	\begin{itemize}
		\item[(a)] $\mathfrak{C}$ contains precisely $n$ isomorphism types.
		
		\item[(b)] Suppose that $\mathfrak{K}$ is a superclass of $\mathfrak{C}$, and $\nu$ is an effective enumeration of $\mathfrak{K}$. Then the class $\mathfrak{C}$ is $\Inf\Ex_{\cong}[\nu]$-learnable by a computable learner.
	\end{itemize}
\end{prop}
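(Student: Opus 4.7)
The plan is to apply Corollary~\ref{coroll:complexity}: I will exhibit $n$ explicit computable infinite linear orders $\mathcal{B}_1,\dots,\mathcal{B}_n$, produce first-order $\Sigma_2$ sentences $\psi_1,\dots,\psi_n$ with $\mathcal{B}_j\models\psi_i$ iff $i=j$, and then invoke the corollary with the trivial oracle $X=\emptyset$. The hypothesis on the index sequence $(e_i)$ is automatic because the index set is finite, so for any ambient enumeration $\nu$ of the chosen superclass $\mathfrak{K}\supseteq\mathfrak{C}$ the required indices exist and may simply be hard-coded into the learner.

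The family I would use is
\[
\mathcal{B}_i \;:=\; i + \eta + (n+1-i),\qquad i=1,\dots,n,
\]
where $\eta$ is the order type of the rationals: $\mathcal{B}_i$ consists of a bottom block of $i$ discrete elements, a middle copy of $\eta$, and a top block of $n+1-i$ discrete elements. Each $\mathcal{B}_i$ is plainly computable and infinite. They are pairwise non-isomorphic because the length of the longest immediate-successor chain starting at the minimum is exactly $i$ and the length of the longest immediate-predecessor chain ending at the maximum is exactly $n+1-i$; these are isomorphism invariants, and they take distinct values for distinct~$i$.

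For $k\ge 1$ let $\Phi_k^{\min}$ be the sentence
\[
\exists y_0\cdots y_{k-1}\Bigl[\,\forall z\,(y_0\le z)\wedge\bigwedge_{r<k-1}\bigl(y_r<y_{r+1}\wedge\forall z\,\neg(y_r<z<y_{r+1})\bigr)\Bigr],
\]
asserting that the minimum exists and begins a chain of $k$ consecutive immediate successors, and let $\Phi_k^{\max}$ be its dual at the maximum. Both are $\Sigma_2$ because ``$y_0$ is the minimum'' and ``$y_{r+1}$ is the immediate successor of $y_r$'' are $\Pi_1$ matrices. The key observation is that an immediate-successor chain in $\mathcal{B}_j$ cannot bridge a discrete block and the middle copy of $\eta$, since $\eta$ has neither a least nor a greatest element; hence the immediate-successor chains starting at the minimum (resp.\ ending at the maximum) have length at most $j$ (resp.\ $n+1-j$), and these bounds are attained. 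Therefore $\mathcal{B}_j\models\Phi_k^{\min}$ iff $k\le j$ and $\mathcal{B}_j\models\Phi_k^{\max}$ iff $k\le n+1-j$, and the $\Sigma_2$ sentence
\[
\psi_i \;:=\; \Phi_i^{\min}\wedge\Phi_{n+1-i}^{\max}
\]
is satisfied by $\mathcal{B}_j$ exactly when both $i\le j$ and $j\le i$, i.e., exactly when $j=i$.

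The only step requiring real attention is the boundary analysis capping the length of immediate-successor chains inside $\mathcal{B}_j$; once this is in hand, everything else is a direct application of Corollary~\ref{coroll:complexity} with $X=\emptyset$.
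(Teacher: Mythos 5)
Your proof is correct and follows essentially the same route as the paper's: the paper's sketch uses exactly the family $\{\,i+\eta+(n+1-i)\,\}$ (instantiated at $n=4$) together with $\exists\forall$-sentences counting consecutive elements at the two endpoints, and then invokes Corollary~\ref{coroll:complexity}. Your version merely writes out the general $n$ and makes the boundary analysis (successor chains cannot cross into $\eta$) explicit, which is a welcome but not substantively different elaboration.
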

\begin{proof}
	We show how to build a family $\mathfrak{C}$ containing precisely $k$ non-iso\-mor\-ph\-ic structures. We set
	\[
		\mathfrak{C} = \{ k+\eta+1; (k-1) + \eta + 2; (k-2) + \eta + 3; \ldots ; 2+\eta + (k-1); 1 + \eta + k \}.
	\]
	
	We also define first-order $\exists\forall$-sentences $\psi_i$ as follows: for a linear order $\mathcal{L}$,
	\begin{enumerate}
		\item The sentence $\psi_1$ says that $\mathcal{L}$ has $k$ consecutive elements in the beginning, i.e., there are elements $a_0<a_1<\ldots<a_{k-1}$ such that $a_0$ is the least element and $a_{i+1}$ is the immediate successor of $a_i$, for every $i\leq k-1$.		
		\item For $1<i<k$, $\psi_i$ says that $\mathcal{L}$ has $k-i+1$ consecutive elements in the beginning and $i$ consecutive elements in the end (i.e., there are $b_{i-1} < b_{i-2} < \ldots < b_0$ such that $b_0$ is the greatest and $b_{j+1}$ is the immediate predecessor of $b_j$).
		\item $\psi_k$ says that $\mathcal{L}$ has $k$ consecutive elements in the end.
	\end{enumerate}
	We apply Corollary~\ref{coroll:complexity} to the class $\mathfrak{C}$ and the sequence $\{\psi_i\}_{1\leq i \leq k}$. Thus, we obtain the desired learnability via a computable learner. Proposition~\ref{prop:lo-fin} is proved.
\end{proof}

On the other hand, the next result shows that one still cannot learn \emph{infinite} families of linear orders.

\begin{thm}\label{theo:LO}
	Let $\mathfrak{K}$ be some class of infinite linear orders, and let $\nu$ be an effective enumeration of $\mathfrak{K}$. Suppose that $\mathfrak{C}$ is a subclass of $\mathfrak{K}$ such that $\mathfrak{C}$ contains infinitely many pairwise non-isomorphic members. Then the class $\mathfrak{C}$ is not $\Inf\Ex_{\cong}[\nu]$-learnable.
\end{thm}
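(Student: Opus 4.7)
The plan is to derive a contradiction by translating $\Inf\Ex_{\cong}[\nu]$-learnability, via Theorem~\ref{thm:Sigma_2-theories}, into the existence of a sequence of $\Sigma^{\infi}_2$ sentences separating the isomorphism types in $\mathfrak{C}$, and then to rule out this situation by a Dickson-style well-quasi-order argument on the gap-profiles of witnessing tuples. The whole point is to exploit the fact that $\Pi^{\infi}_1$-types of tuples in linear orders carry very little information beyond finite bounds on gap sizes, which is much more restrictive than what the situation for general structures would allow.

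First I would assume, for contradiction, that $\mathfrak{C} = \{\mathcal{B}_i : i\in\omega\}$ (pairwise non-isomorphic, all infinite) is $\Inf\Ex_{\cong}[\nu]$-learnable and fix $\Sigma^{\infi}_2$ sentences $(\psi_i)_{i\in\omega}$ with $\mathcal{B}_j\models\psi_i$ iff $i=j$. Writing $\psi_i = \bigvee_k \exists \bar x\, \Phi_{i,k}(\bar x)$ with $\Phi_{i,k}\in\Pi^{\infi}_1$, I would pick for each $i$ a disjunct witnessed in $\mathcal{B}_i$ by a strictly increasing tuple $\bar a^i = (a^i_1 < \cdots < a^i_{n_i})$, and replace that disjunct by the full $\Pi^{\infi}_1$-type $T_i$ of $\bar a^i$ in $\mathcal{B}_i$. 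The resulting $\Sigma^{\infi}_2$ sentence $\psi'_i := \exists\bar x\, T_i(\bar x)$ is satisfied by $\mathcal{B}_i$ and logically implies $\psi_i$, so $(\psi'_i)_{i\in\omega}$ also separates the $\mathcal{B}_i$'s.

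The main obstacle is the following model-theoretic lemma, which I would prove first: for an increasing tuple $\bar a$ of length $n$ in a linear order, the $\Pi^{\infi}_1$-type of $\bar a$ is completely determined by the gap-cardinality profile $\bar g(\bar a) = (g_0, g_1, \ldots, g_n) \in (\omega \cup \{\infty\})^{n+1}$, where $g_0 = |(-\infty, a_1)|$, $g_j = |(a_j, a_{j+1})|$, $g_n = |(a_n,\infty)|$, with $\infty$ marking infinite gaps. The argument passes through the observation that any $\Pi_1$-formula $\forall\bar y\,\theta(\bar x,\bar y)$ in the language of orders is equivalent, via $\forall\bar y\,\theta\equiv\neg\exists\bar y\,\neg\theta$, to a finite conjunction of disjunctions of upper bounds of the form $g_j\leq r_j$: each $\bar y$-configuration forbidden by $\neg\theta$ exists iff all gap-requirements it places on $\bar x$ are met, so forbidding it is the disjunction of saying that some such requirement is violated. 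From this one shows that $T_i$ is equivalent, in every linear order, to the simple conjunction $\bigwedge_j g_j(\bar x) \leq g^i_j$ where $\bar g^i := \bar g(\bar a^i)$; consequently, a tuple $\bar x$ in any linear order $\mathcal{L}$ realizes $T_i$ if and only if $\bar x$ is strictly increasing of length $n_i$ and $\bar g(\bar x) \leq \bar g^i$ componentwise.

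The endgame is then a short Dickson argument. By pigeonhole I pass to a subsequence on which $n_i = n$ is constant, so the sequence $(\bar g^i)_{i\in\omega}$ lies in $(\omega\cup\{\infty\})^{n+1}$, which is a well-quasi-order under componentwise $\leq$ (Dickson's Lemma, applied to a finite product of the well-order $\omega\cup\{\infty\}$). I fix $i<j$ with $\bar g^i \leq \bar g^j$. Then $\bar a^i \in \mathcal{B}_i$ is a strictly increasing $n$-tuple with $\bar g(\bar a^i)=\bar g^i \leq \bar g^j$, so by the lemma it realizes $T_j$ in $\mathcal{B}_i$. This gives $\mathcal{B}_i \models \psi'_j$, and since $\psi'_j \Rightarrow \psi_j$, also $\mathcal{B}_i \models \psi_j$, contradicting separation and completing the proof.
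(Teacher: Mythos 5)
Your reduction is essentially sound, and in fact it re-derives the paper's key structural input in a cleaner form: the observation that a $\forall$-formula about an increasing tuple in a linear order can only express finite disjunctions of upper bounds on the cardinalities of the gaps is exactly the content of Lemma~\ref{lem:formulas} (every $\Pi^{\infi}_1$ formula reduces to existential information in the signature $\{\leq,\First,\Last,\Succ\}$), and your monotonicity statement --- that realizing $T_i$ is equivalent to being increasing of length $n_i$ with gap profile componentwise $\leq \bar g^i$ --- is correct. The passage from $\psi_i$ to the single $\exists\forall$-sentence $\exists \bar x\, T_i(\bar x)$, which still separates the family, is also fine.

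The genuine gap is in the endgame, at the sentence ``by pigeonhole I pass to a subsequence on which $n_i=n$ is constant.'' Pigeonhole on an infinite index set mapped into $\omega$ does not yield an infinite fiber: the witness lengths $n_i$ are dictated by the sentences $\psi_i$ handed to you by Theorem~\ref{thm:Sigma_2-theories}, and nothing prevents them from being unbounded (indeed, in the intended examples the finite configurations distinguishing the $\mathcal{C}_i$ necessarily grow). Dickson's Lemma applies only to tuples of a \emph{fixed} length, so when the $n_i$ are unbounded your argument produces no comparable pair $\bar g^i\leq\bar g^j$ at all. Repairing this is not a one-liner: you would need Higman's Lemma on finite words over $\omega\cup\{\infty\}$, but Higman gives a subsequence-with-domination embedding between profiles of \emph{different} lengths, which does not directly transfer a witness of $T_i$ in $\mathcal{B}_i$ to a witness of $T_j$ (the longer profile demands extra points whose surrounding gaps must satisfy new upper bounds, possibly $0$, and $\mathcal{B}_i$ need not supply them). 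To make Higman bite, one must first show that realizability of a gap-profile constraint depends only on the sequence of block sizes of the order together with the endpoint data --- and that further reduction is precisely the block-counting analysis (Claims~\ref{claim:aux-001}--\ref{claim:aux-004} and the recursion on the maximal block size $r$) that constitutes the bulk of the paper's proof. So your argument is correct and complete only under the additional hypothesis that the witness lengths are bounded; the unbounded case, which is the heart of the matter, is missing.
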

\begin{proof}
	The key ingredient of the proof is an analysis of $\Sigma^{\infi}_2$ formulas for linear orders $\mathcal{L}$. First, we define the following auxiliary relations on $\mathcal{L}$:
	\begin{itemize}
		\item A first-order $\forall$-formula $\First(x)$ says that $x$ is the least element of $\mathcal{L}$.
		
		\item An $\forall$-formula $\Last(x)$ says that $x$ is the greatest element of $\mathcal{L}$.
		
		\item An $\forall$-formula $\Succ(x,y)$ says that $x$ and $y$ are consecutive elements, i.e.,
		$(x<y) \ \&\ \neg\exists z (x < z < y)$.
		
		\item A $\Sigma^c_2$ formula $\Block(x,y)$ says the following: either $x=y$, or there are only finitely many elements $z$ between $x$ and $y$ in $\mathcal{L}$. The \emph{block} of an element $x\in\mathcal{L}$ is the set
		\[
			\Block_{\mathcal{L}}[x] := \{ y\,\colon \mathcal{L}\models \Block(x,y)\}.
		\]
	\end{itemize}
	
	\begin{lemma}[{\cite{Mon-10}}] \label{lem:formulas}
		\begin{enumerate}
			\item In the class of countably infinite linear orders, every $\Pi^{\infi}_1$ formula in the signature $\{ \leq\}$ is logically equivalent to a $\Sigma^{\infi}_1$ formula in the signature $\{ \leq, \First, \Last, \Succ\}$.
			
			\item Let $\mathcal{A}$ and $\mathcal{B}$ be countably infinite linear orders. Then we have:
			\[
				\mathcal{A} \leq_2 \mathcal{B}\ \Leftrightarrow\ (\mathcal{A}, \First, \Last, \Succ) \leq_1 (\mathcal{B}, \First, \Last, \Succ).
			\]
		\end{enumerate}
	\end{lemma}
	\begin{proof}
		The proof of~(1) can be recovered from \cite[p.~871]{Mon-10}, see also Lemma II.43 in~\cite{Mon-Book}. 
		
		(2): Recall that the relations $\First$, $\Last$, and $\Succ$ are definable by $\forall$-for\-mu\-las in the signature $\{\leq\}$. This implies that every first-order $\exists$-for\-mu\-la $\psi(\bar x)$ in the signature $\{ \leq, \First, \Last, \Succ\}$ is logically equivalent to a first-order $\exists\forall$-formula $\psi^{[1]}(\bar x)$ in the signature $\{ \leq\}$. 
		
		For a linear order $\mathcal{L}$, let $\mathcal{L}^{\#}$ denote the structure $(\mathcal{L},\First, \Last, \Succ)$. Suppose that $\mathcal{A}^{\#} \nleq_1 \mathcal{B}^{\#}$. Then there is a $\Sigma^{\infi}_1$-sentence 
		\[
			\xi = \underset{i\in I}{\bigvee\skipmm{4}\bigvee} \exists \bar x_i \psi_i (\bar x_i),
		\]
		where $\psi_i$ are quantifier-free, such that $\mathcal{B}^{\#}\models \xi$ and $\mathcal{A}^{\#}\nvDash \xi$. We choose an index $i_0\in I$ such that the $\exists$-sentence $\theta := \exists \bar x_{i_0} \psi_{i_0}(\bar x_{i_0})$ is true in $\mathcal{B}^{\#}$. Clearly, $\mathcal{A}^{\#}\nvDash \theta$. Hence, the $\exists\forall$-sentence $\theta^{[1]}$ is true in $\mathcal{B}$ and false in $\mathcal{A}$. Therefore, $\mathcal{A} \nleq_2 \mathcal{B}$.
		
		Suppose that $\mathcal{A}\nleq_2 \mathcal{B}$. Then there is a $\Sigma^{\infi}_2$-sentence 
		\[
			\xi = \underset{j\in J}{\bigvee\skipmm{4}\bigvee} \exists \bar y_j \psi_j (\bar y_j),
		\]
		where $\psi_j$ are $\Pi^{\infi}_1$-formulas, such that $\mathcal{B}\models \xi$ and $\mathcal{A}\nvDash\xi$. Choose an index $j_0\in J$ such that the formula $\exists \bar y_{j_0} \psi_{j_0}(\bar y_{j_0})$ is true in $\mathcal{B}$. By item~(1), there is a $\Sigma^{\infi}_1$-formula $\lambda(\bar y_{j_0})$ in the signature $\{\leq, \First, \Last, \Succ\}$, which is logically equivalent to $\psi_{j_0}$. In turn, the formula $\exists \bar y_{j_0} \lambda(\bar y_{j_0})$ is logically equivalent to a $\Sigma^{\infi}_1$-sentence $\delta$ in the signature $\{\leq, \First, \Last, \Succ\}$. It is not hard to show that $\mathcal{B}^{\#}\models \delta$ and $\mathcal{A}^{\#}\nvDash \delta$. Therefore, $\mathcal{A}^{\#}\nleq_1\mathcal{B}^{\#}$.	
	\end{proof}
	
	Towards a contradiction, we suppose that there is a family of infinite linear orders $\mathfrak{C} = \{ \mathcal{C}_i \,\colon i\in\omega\}$ such that $\mathfrak{C}$ is $\Inf\Ex_{\cong}[\nu]$-learnable and the structures $\mathcal{C}_i$ are pairwise non-isomorphic. Then by Theorem~\ref{thm:Sigma_2-theories}, there is a sequence of $\Sigma^{\infi}_2$ sentences $(\psi_i)_{i\in\omega}$ such that
	\[
		\mathcal{C}_i \models \psi_j \ \Leftrightarrow\ i=j.
	\]
	We apply Lemma~\ref{lem:formulas}.(1), and for every $i$, we obtain a $\Sigma^{\infi}_1$ sentence $\xi_i$ in the signature $\{ \leq, \First, \Last, \Succ\}$, which is equivalent to $\psi_i$. W.l.o.g., one can choose $\xi_i$ as a finitary $\exists$-sentence: this is because any $\Sigma^{\inf}_1$ sentence $\phi$ is a countable disjuction of finitary $\exists$-formulas, and therefore $\phi$ is true if and only if there is at least one of such $\exists$-formulas which is true. Thus, the intuition behind $\xi_i$ can be explained as follows. The sentence $\xi_i$ describes a finite substructure $\mathcal{F}_i \subset (\mathcal{C}_i, \First, \Last, \Succ)$ such that $\mathcal{F}_i$ cannot be isomorphically embedded into $\mathcal{C}_j$, for $j\neq i$.
	
	Clearly, at least one of the following four cases is satisfied by infinitely many $\mathcal{C}_i$:
	\begin{enumerate}
		\item $\mathcal{C}_i$ has neither least nor greatest elements;
		
		\item $\mathcal{C}_i$ has the least element, but there is no greatest one;
		
		\item $\mathcal{C}_i$ has the greatest element, but there is no least;
		
		\item $\mathcal{C}_i$ has both.
	\end{enumerate}
	Thus, w.l.o.g., one may assume that every $\mathcal{C}_i$ has both least and greatest elements. All other cases can be treated in a way similar to the exposition below.
	
	We give an excerpt from the description \cite[p.~872]{Mon-10} of the relation $\leq_2$ for linear orders.
	
	Let $\mathcal{A}$ be a countably infinite linear order. We define:
	\begin{itemize}
		\item Let $t_0(\mathcal{A}) = n$ if $\mathcal{A} = n + \mathcal{A}_1$, where $n\in\omega$ and the order $\mathcal{A}_1$ has no least element. Set $t_0(\mathcal{A}) = \infty$ if $\mathcal{A} = \omega + \mathcal{A}_1$, where $\mathcal{A}_1$ has no least element.
		
		\item Define $t_2(\mathcal{A}) = m$ if $\mathcal{A} = \mathcal{A}_2 + m$, where $m\in\omega$ and $\mathcal{A}_2$ has no greatest element. Let $t_2(\mathcal{A}) = \infty$ if $\mathcal{A} = \mathcal{A}_2 + \omega^{\ast}$, where $\mathcal{A}_2$ has no greatest element.
	\end{itemize}
	
	As per usual, we assume that $\infty$ is greater than every natural number. We write $\mathcal{A} \equiv_2 \mathcal{B}$ if $\mathcal{A} \leq_2 \mathcal{B}$ and $\mathcal{B} \leq_2 \mathcal{A}$.
	
	\begin{lemma}[{\cite{Mon-10}}] \label{lem:Mon}
		Let $\mathcal{A}$ and $\mathcal{B}$ be countably infinite linear orders. 
		\begin{enumerate}
			\item Suppose that $\max(t_0(\mathcal{A}), t_2(\mathcal{A})) = \infty$. Then, independently of $\mathcal{B}$, we have
			\[
				\mathcal{A} \leq_2 \mathcal{B} \ \Leftrightarrow\ t_0(\mathcal{A}) \geq t_0(\mathcal{B}) \text{ and } t_2(\mathcal{A}) \geq t_2(\mathcal{B}).
			\]
			
			\item Suppose that $\mathcal{A} = n_0 + \mathcal{A}_1 + n_2$ and $\mathcal{B} = m_0 + \mathcal{B}_1 + m_2$, where $n_0,n_2,m_0,m_2\in\omega$, and both $\mathcal{A}_1$ and $\mathcal{B}_1$ have no endpoints. Then 
			\[
				\mathcal{A} \leq_2 \mathcal{B} \ \Leftrightarrow\ (n_0 \geq m_0) \text{ and } (\mathcal{A}_1 \leq_2 \mathcal{B}_1) \text{ and } (n_2 \geq m_2).
			\]
			
			\item Suppose that both $\mathcal{A}$ and $\mathcal{B}$ have no endpoints. Then:
			\begin{itemize}
				\item[(3.1)] If for every non-zero $n\in\omega$, $\mathcal{A}$ has a tuple of $n$ consecutive elements, then $\mathcal{A} \leq_2 \mathcal{B}$.
				
				\item[(3.2)] Suppose that $m$ is a non-zero natural number, and both $\mathcal{A}$ and $\mathcal{B}$ do not have tuples of $m+1$ consecutive elements. If $\mathcal{A}$ has infinitely many tuples of $m$ consecutive elements, then $\mathcal{A} \leq_2 \mathcal{B}$.
			\end{itemize}
		\end{enumerate}
	\end{lemma}	
	
	Lemma~\ref{lem:Mon}.(1) implies the following: if $t_0(\mathcal{A}) = t_0(\mathcal{B}) = \infty$, then we always have either $\mathcal{A} \leq_2 \mathcal{B}$ or $\mathcal{B} \leq_2 \mathcal{A}$. Hence, we deduce that there is at most one structure $\mathcal{C}_i$ with $t_0(\mathcal{C}_i) = \infty$.
	
	A similar argument shows that there is at most one $\mathcal{C}_i$ with $t_2(\mathcal{C}_i) = \infty$. Therefore, w.l.o.g., one can assume that for every $i\in\omega$, both values $t_0(\mathcal{C}_i)$ and $t_2(\mathcal{C}_i)$ are finite. Let
	\[	
		\mathcal{C}_i = m_i + \mathcal{D}_i + n_i,
	\]
	where $m_i, n_i\in\omega$, and the order $\mathcal{D}_i$ has no endpoints. 
	For $i\in\omega$, we define
	\[
		q_i := \sup \{ card(\Block_{\mathcal{D}_i}[x]) \,\colon x \in\mathcal{D}_i\}.
	\]
	
	\begin{claim}\label{claim:aux-001}
		There are only finitely many $i$ with $q_i = \infty$.
	\end{claim}
	\begin{proof}
		For simplicity of exposition, towards a contradiction, suppose that every $q_i$ is infinite. Note that Lemma~\ref{lem:Mon}.(3.1) shows that $\mathcal{D}_i \equiv_2 \mathcal{D}_j$ for all $i$ and $j$.
		
		Since for every $j\neq 0$, we have $\mathcal{C}_j \nleq_2 \mathcal{C}_0$, by Lemma~\ref{lem:Mon}.(2), we obtain that $\mathcal{C}_j$ satisfies at least one of the following two conditions: $m_j < m_0$ or $n_j < n_0$. W.l.o.g., we assume that there are infinitely many $j$ with $m_j < m_0$. Then there is a number $m^{\ast} < m_0$ and an infinite sequence $j[0] < j[1] < j[2] <\dots$ such that $m_{j[k]} = m^{\ast}$ for all $k$. 
		
		Recall that $\mathcal{C}_{j[k]} \nleq_2 \mathcal{C}_{j[0]}$ for all $k \neq 0$. By Lemma~\ref{lem:Mon}.(2), we have $n_{j[k]} < n_{j[0]}$ for every non-zero $k$. Hence, there is a number $n^{\ast} < n_{j[0]}$ such that $n_{j[k]} = n^{\ast}$ for infinitely many $k$. Clearly, if $k\neq k'$ are such numbers, then $\mathcal{C}_{j[k]} \equiv_2 \mathcal{C}_{j[k']}$, which gives a contradiction.
	\end{proof}
	
	By Claim~\ref{claim:aux-001}, one can assume that $q_i<\infty$ for every $i$.
	
	\begin{claim}\label{claim:aux-002}
		There is a number $r\in \omega$ such that $q_i \leq r$ for every $i$.
	\end{claim}
	\begin{proof}
		Again, for simplicity of exposition, assume that $q_0 < q_1 <q_2 <\dots$. Recall that $\mathcal{C}_j\nvDash \xi_0$ for all $j\neq 0$. Suppose that the finite structure $\mathcal{F}_0$ associated with the $\exists$-sentence $\xi_0$ contains precisely $l_0$ elements. 
		
		Choose $j^{\ast}$ such that $q_{j^{\ast}} \geq 2 l_0$. Clearly, for every $j \geq j^{\ast}$, the order $\mathcal{D}_j$ contains at least one block of size at least $2 l_0$. Thus, $\mathcal{F}_0$ cannot be embedded into $\mathcal{C}_j$ only because of one of the following two obstacles:
		\begin{itemize}
			\item $m_j < m_0$, i.e., the size of the first (under $\leq_{\mathcal{C}_j}$) block in $\mathcal{C}_j$ is too small for an appropriate embedding; or
			
			\item $n_j < n_0$, i.e., the size of the last block in $\mathcal{C}_j$ is too small.
		\end{itemize}
		The relation $\Succ^{\mathcal{C}_j}$ won't give us any problems, since one can embed all the $\mathcal{F}_0$-blocks (except the first one and the last one) inside a $\mathcal{D}_j$-block of size $\geq 2 l_0$.
		
		As in Claim~\ref{claim:aux-001}, we can assume that there is a number $m^{\ast} < m_0$ such that $m_j = m^{\ast}$ for infinitely many $j\geq j^{\ast}$. Form an increasing sequence $j[0] < j[1] < j[2] < \dots$ of these $j$. Recall that $q_{j[l]} < q_{j[l+1]}$ for all $l\in\omega$. Re-iterating the argument above, we obtain that there is a number $n^{\ast} < n_{j[0]}$ such that there are infinitely many $l$ with $n_{j[l]} = n^{\ast}$. Choose a sequence $l[0] < l[1] < l[2] < \dots$ of these $l$. Suppose that the structure $\mathcal{F}_{j[l[0]]}$ contains precisely $t_1$ elements.
		
		Find the least $l^{\ast} = l[s^{\ast}]$ with $q_{j[l^{\ast}]} \geq 2 t_1$. Recall that we have $m_{j[l^{\ast}]} = m_{j[l[0]]} = m^{\ast}$ and $n_{j[l^{\ast}]} = n_{j[l[0]]} = n^{\ast}$. Thus, as before, it is not hard to show that the structure $\mathcal{F}_{j[l[0]]}$ can be embedded into $\mathcal{C}_{j[l^{\ast}]}$. This shows that $\mathcal{C}_{j[l^{\ast}]} \models \xi_{j[l[0]]}$, which gives a contradiction.
	\end{proof}
	
	By Claim~\ref{claim:aux-002}, we obtain that
	\[
		r := \sup \{ q_i\,\colon i\in\omega\} < \infty.
	\]
	Moreover, we will assume that $q_i = r$ for all $i\in\omega$: indeed,
	\begin{itemize}
		\item If there are only finitely many $i$ with $q_i = r$, then we just delete the corresponding structures $\mathcal{C}_i$. After that the value $r$ goes down.
		
		\item If there are already infinitely many $i$ with $q_i = r$, then we delete all $\mathcal{C}_j$ with $q_j < r$.
	\end{itemize}
	
	\begin{claim}\label{claim:aux-003}
		There are only finitely many $i$ such that the order $\mathcal{D}_i$ has infinitely many blocks of size $r$.
	\end{claim}
	\begin{proof}
		Again, for simplicity, assume that every $\mathcal{D}_i$ has infinitely many blocks of size $r$. Since $q_i = r$ for all $i$, Lemma~\ref{lem:Mon}.(3.2) implies that $\mathcal{D}_i \equiv_2 \mathcal{D}_j$ for all $i$ and $j$.
		
		As in Claim~\ref{claim:aux-002}, $\mathcal{F}_0$ is not embeddable into $\mathcal{C}_j$, $j\neq 0$, and this is witnessed by one of the following: either $m_j < m_0$ or $n_j < n_0$. We recover a number $m^{\ast} < m_0$ and a sequence $j[0] < j[1] < j[2] < \dots$ such that $m_{j[l]}=m^{\ast}$ for all $l$.
		
		The finite structure $\mathcal{F}_{j[0]}$ is not embeddable into $\mathcal{C}_{j[l]}$, $l\neq 0$. By Lemma \ref{lem:Mon}.(2), this implies that $n_{j[l]} < n_{j[0]}$ for non-zero $l$. Again, there is a number $n^{\ast} < n_{j[0]}$ and a sequence $l[0] < l[1] < l[2] <\dots$ such that $n_{j[l[s]]} = n^{\ast}$ for all $s$. This shows that $\mathcal{C}_{j[l[1]]} \models \xi_{j[l[0]]}$, and this yields a contradiction.
	\end{proof}
	
	Claim~\ref{claim:aux-003} implies that one may assume the following: each $\mathcal{D}_i$ has only finitely many blocks of size $r=q_i$.
	
	The rest of the proof is only sketched, since all the key ideas are already present. Let $\#(r;i)$ denote the number of blocks of size $r$ inside $\mathcal{D}_i$.
	
	\begin{claim} \label{claim:aux-004}
		There is a number $N$ such that $\#(r;i) \leq N$ for all $i$.
	\end{claim}
	\begin{proof}
		Assume that $\#(r;i) < \#(r;i+1)$ for all $i$. As before, the finite structure $\mathcal{F}_0$ cannot be embedded into $\mathcal{C}_j$, where $j$ is large enough, and this can be witnessed only by one of the following conditions: $m_j < m_0$ or $n_j < n_0$ for such $j$. Hence, we assume that there is a sequence $j[0] < j[1] < j[2] < \dots$ with $m_{j[l]} = m^{\ast} < m_0$ for all $l$. By considering possible embeddings of the finite structure $\mathcal{F}_{j[0]}$, we recover a sequence $l[0] < l[1] < l[2] < \dots$  with $n_{j[l[s]]} = n^{\ast} < n_{j[0]}$ for all $l$. Clearly, $\mathcal{F}_{j[l[0]]}$ can be embedded into any $\mathcal{C}_{j[l[s]]}$, where $s$ is large enough, and this produces a contradiction.
	\end{proof}
	
	By Claim~\ref{claim:aux-004}, one can assume that $\#(r;i) = N <\infty$ for all $i$. For simplicity, consider $N = 2$. Then every $\mathcal{D}_i$ can be presented in the following form:
	\[
		\mathcal{D}_i = \mathcal{D}_{i,0} + r + \mathcal{D}_{i,1} + r + \mathcal{D}_{i,2}, \text{ where}
	\]
	\begin{itemize}
		\item every $\mathcal{D}_{i,j}$ does not have endpoints, and
		
		\item every block inside $\mathcal{D}_{i,j}$ has size at most $r-1$.
	\end{itemize}
	After that, one needs to write a cumbersome proof by recursion in $r$. The arrangement of this recursion can be recovered from the ideas from~\cite[p.~872]{Mon-10}. 
	
	In our case, the first stage of recursion will roughly consist of the following claims:
	\begin{itemize}	
		\item[(a)] We say that a block of size $(r-1)$ is \emph{large}. Then one can prove that there are only finitely many $i$ such that every $\mathcal{D}_{i,j}$ contains infinitely many large blocks.
		
		\item[(b)] If there are infinitely many $i$ such that, say, both $\mathcal{D}_{i,0}$ and $\mathcal{D}_{i,1}$ contain infinitely many large blocks, then one can assume that there is a number $N_1$ such that every $\mathcal{D}_{i,2}$ has at most $N_1$ large blocks. In this case, the next stage of recursion will play essentially only with $\mathcal{D}_{i,2}$.
		
		\item[(c)] Assume that there are infinitely many $i$ such that $\mathcal{D}_{i,0}$ has infinitely many large blocks, but every $\mathcal{D}_{i,1}$ and $\mathcal{D}_{i,2}$ has only finitely many  large blocks. Then there are three main variants:
		\begin{itemize}
			\item[(c.1)] There are a number $N_2$ and a sequence $i_0 < i_1 < i_2 < \dots$ such that for every $k$, $\mathcal{D}_{i_k,1}$ has precisely $N_2$ large blocks and $\mathcal{D}_{i_k,2}$ contains, say, at least $k$ large blocks. Then one needs to invoke recursion for $\mathcal{D}_{i,1}$.
			
			\item[(c.2)] A case similar to the previous one, but here we require that every $\mathcal{D}_{i_k,1}$ has at least $k$ large blocks. Then one can obtain a contradiction.
			
			\item[(c.3)] There is a number $N_3$ such that every $\mathcal{D}_{i,1}$ or $\mathcal{D}_{i,2}$ has at most $N_3$ large blocks. Then proceed to the next recursion stage by considering both $\mathcal{D}_{i,1}$ and $\mathcal{D}_{i,2}$ simultaneously.
		\end{itemize}
		
		\item[(d)] Assume that each $\mathcal{D}_{i,j}$ has only finitely many large blocks. The main cases are as follows:
		\begin{itemize}
			\item[(d.1)] There are a number $N_4$ and a sequence $i_0 < i_1 < i_2 < \dots$ such that for every $k$, $\mathcal{D}_{i_k,0}$ contains precisely $N_4$ large blocks and each of $\mathcal{D}_{i_k,1}$ and $\mathcal{D}_{i_k,2}$ has at least $k$ large blocks. Then one calls recursion for $\mathcal{D}_{i,0}$.
			
			\item[(d.2)] A case similar to the previous one, but now we require that $\mathcal{D}_{i_k,1}$ always keeps precisely $N_5$ large blocks. Then the next recursion stage will work with $\mathcal{D}_{i,0}$ and $\mathcal{D}_{i,1}$ simultaneously.
			
			\item[(d.3)] For all $k$, every block $\mathcal{D}_{i_k, j}$ contains at least $k$ large blocks. This leads to a contradiction.
			
			\item[(d.4)] There is a number $N_6$ such that each $\mathcal{D}_{i,j}$ contains at most $N_{6}$ large blocks. Then we go to the next stage of recursion, and we have to consider all $\mathcal{D}_{i,j}$ simultaneously.
		\end{itemize}
	\end{itemize}
	
	When the outlined recursion procedure finishes, we will get a contradiction in all considered cases. This implies that the class $\mathfrak{C}$ cannot be $\Inf\Ex_{\cong}[\nu]$-le\-ar\-nable. Theorem~\ref{theo:LO} is proved.
\end{proof}

\section{Conclusions and open problems}
In this paper, we investigated the problem of learning computable structures up to isomorphism. We used infinitary logic to offer a model-theoretic characterization of which families of structures are $\Inf\Ex$-learnable. 
Applying such a characterization,  we proved  that our learning paradigm is very sensitive to the algebraic properties of the structures to be learned: e.g., while there is
an infinite learnable family of distributive lattices, no infinite family of linear orders is learnable.

Many questions remain open. In particular, one shall ask which families of structures can be learned when only positive data of the target structure is available. The ideal goal would be to obtain an analogue of Theorem \ref{thm:Sigma_2-theories} for the learning type $\Text\Ex_{\cong}$ (already introduced in \cite{FKS-ta}). Moreover, one obtains natural variants of the learning problems considered in this paper by replacing isomorphism with weaker notions (such as the bi-embeddability relation discussed in \cite{FKS-ta}) or with stronger ones (such as computable isomorphism).

Finally, in this paper we only marginally considered the complexity of the learners described. We still have a limited understanding of which families of structures can be learned by a learner of a given fixed complexity. In this direction, the following question looks particularly intriguing: is there a pair of two (non-isomorphic) structures which is $\Inf\Ex_{\cong}$-learnable, but no computable learner can learn it?

\end{document}